\journal{ $\heartsuit$}
\newtheorem{theo}{Theorem}[section]
\newtheorem{prop}[theo]{Proposition}
\newtheorem{assu}{Assumption}
\newtheorem{lemm}[theo]{Lemma}
\newtheorem{rema}[theo]{Remark}
\def\R{\mathbb{R}}
\def \N{\mathbb{N}}
\def \Z{\mathbb{Z}}
\def \L{\mathbb{L}}
\def\X{\mathbf{X}}
\def\LL{\mathcal{L}}
\def\w{\omega}
\def\E{\mathbb{E}} 
\def\P{\mathbb{P}} 
\def\PP{\mathbf{P}} 
\def\EE{\mathbf {E}} 
\def\IS{\mathbb{Q}} 
\def\1{\mathds{1}} 
\newcommand{\Var}{\mathbb{V}\mbox{ar}}
\def\l{\ell}
\def \eps{\varepsilon} 
\def\FF{\mathcal{F}}
\def\to{\rightarrow}
\def\8{\infty}
\def\cL{\mathcal{L}}
\def\M{\mathcal {M}}
\def\dd{\mathrm{d}} 
\def\to{\rightarrow}
\def\QQ{\mathbb{Q}}
\def\1{\mathrm{1}}
\def\X{\mathrm{X}}
\def\nn{\nonumber}
\def\GG{\mathcal{G}}
\def\BB{\mathcal{B}}
\def\FF{\mathcal{F}}
\def\GG{\mathcal{G}}
\def\KK{\mathcal{K}}
\def\S{\mathcal{S}}
\def\C{\mathcal{C}}
\newcommand{\ec}{\color{black}}
\begin{document}

\begin{frontmatter}

\title{ Limit of the environment viewed from Sinaï's walk
 }


\author[m1]{Francis Comets 
}
\ead{comets@lpsm.paris}
\author[m2,m3]{Oleg Loukianov}
\ead{oleg.loukianov@u-pec.fr}
\author[m2]{Dasha Loukianova }
\ead{dasha.loukianova@univ-evry.fr}

\address[m1]{Laboratoire de Probabilit\'es, Statistique et  Mod\'elisation,
  Universit\'e Paris Diderot, UMR~CNRS~8001,\\ 75205 Paris cedex 13, France.} 
\address[m2]{Laboratoire de Mathématiques et Mod\'elisation d'\'Evry, Universit\'e d'\'Evry Val d'Essonne, UMR~CNRS~8071, USC~INRA,\\ 23 Boulevard de France 91037 Evry cedex, France.}
\address[m3]{D\'epartement Informatique, IUT de Fontainebleau, Universit\'e Paris Est.}

\cortext[cor1]{Corresponding author: Dasha Loukianova }
\begin {abstract}For Sinaï's walk $(X_k)$ we show that the empirical measure of the environment seen from the particle $(\bar\w_k)$ converges in law to some  random measure $\S_\8. $ This limit measure is
explicitly given in terms of the infinite valley, which construction goes back to \cite {Golosov}. 
As a consequence an "in law" ergodic theorem holds:
 \begin{equation*}
\frac 1n\sum_{k=1}^nF (\bar{\omega}_k) \stackrel{\LL}{\longrightarrow} \int_\Omega F d\S_\8 \, .
\end{equation*} 
When the last limit is deterministic, it holds in probability. This allows some extensions to the recurrent case of the ballistic "environment's method" dating back to  \cite{KozMol}. In particular, we show an LLN and a mixed CLT for the sums $\sum_{k=1}^nf(\Delta X_k)$ where $f$ is bounded and 
depending on the steps $\Delta X_k:=X_{k+1}-X_k$.
\end{abstract}

\begin{keyword}  Random walk in random environment \sep Recurrent regime  \sep Localisation \sep Environment viewed from the particle

 60K37 
 \sep 60J55 
 \sep 60B10
 \sep 60G50

\end{keyword}

\end{frontmatter}

\section{Introduction, assumptions and main results} \label{sec:intro}

 \subsection{Model}
Let $\w=\{\w(x);\ {x\in\Z}\}$ 
be a collection of i.i.d. random variables taking values in $[0,1]$.  Denote $\Omega:=[0,1]^{\Z}$,  $\P$ the distribution of $\w$ on $(\Omega,\BB(\Omega))$ and  $\E$ the expectation under this law.  For fixed $\w\in\Omega,$ let $X=(X_k)_{k\in\N},$ be the time-homogeneous   Markov chain on $\Z_+$ with transition 
function $ p^{\w}$ given 
 by
$p^{\w}(0,1)=1$, and for all $x\in\Z_+^*,$
\[  p^{\w}(x,y)=\left  \{\begin{array}{lr} \w(x)&\mbox{if}\
y=x+1,\\ 1-\w(x)&\mbox{if}\ y=x-1,\\ 0&\mbox{otherwise}.
\end{array}\right.
\]
For $x\in\Z_+,$ and fixed $\w\in\Omega$ we denote  by $P^{\w}_x$  the law on $(\Z_+^{\N}, \BB(\Z_+^{\N}))$ of the Markov chain $X$ starting from $x$. This is the \emph{quenched } law of $X$.
The law of the couple $(\w,X)$ is the probability measure $\PP_x$ on $(\Omega\times \Z_+^{\N}
,\BB(\Omega)\otimes \BB(\Z_+^{\N})) $ 
defined for all $x\in\Z_+$ and all $F\in\BB(\Omega)$ and $G\in \BB(\Z_+^{\N})$ by:

\[ 
\PP(F\times G)=\int_F P_x^{\omega}(G)\P(\dd\omega),
\]
this  is the  \emph{annealed}  law. 
 The annealed law is also dependent on the starting point of the walk, but this dependence is less important, because the walk is not 
 a Markov chain under this law. We do not  keep this dependence in our notation.  
We  write  $E_x^{\w}$ and
$\EE$   for   the   corresponding   quenched   and   annealed
expectations, respectively.
For simplicity and following \cite {Golosov} and \cite {GPS} we consider the walk on the positive integers reflected at $0,$ but we need the environnement be defined on $\Z$ to define later the infinite valley of the potential. 
  Denote, for $x\in\Z,$  
  \begin{equation}\label{eq:rho}
  \rho_x = \frac {1-\w(x)}{\w (x)}.
  \end{equation}
It was shown by \cite{Sol} that when 
\begin{equation}\label{rec}
\E\log \rho_0=0,
\end{equation}
 for $\P$-almost all $\w$ the Markov chain $X$ is recurrent, otherwise the walk is     
transient.
This paper focuses on the recurrent case, hence \eqref {rec} will be in force for all our results. 

\subsection{ Motivation of the paper: environment viewed from the particle }

For $\omega \in \Omega$ and $x \in \Z$, denote by  $T_x$ the shift operator $T_x: \Omega \to \Omega$, 
which shifts the environment by the vector $x$,
 i.e. 
 $$ \forall y\in\Z,\quad  (T_x \omega)(y)=\omega(x+y).$$
The environment seen from the walker is the $\Omega$-valued process $(\bar{\omega}_k)$ given by:
$$   \bar{\omega}_k= T_{X_k} \omega \, ,\quad  k\in\N\,.$$

It is well known since \cite{KozMol} that $( \bar{\omega}_k, k \geq 0)$ is a Markov chain (with respect to both $\PP$ and $P_0^{\w}$  ), 
with the transition kernel
\begin{equation} \label{eq:defQ}
R(\w, d\w')=\w(0)\delta _{T_1 \w}(d \w ')+(1-\w(0))\delta_{T_{-1}\w}(d\w ').
\end{equation}
The state space of this Markov chain is very complex, however, in the transient ballistic case, which is characterised (\cite{Sol}) by the linear speed  of escape of the  walk  to infinity:   
\begin{equation}\label{eq:asvel}
X_n/n\to v\neq 0,
\end{equation}
\cite{KozMol} showed that there exists a unique invariant probability $\QQ$ for the kernel $T$, which is absolutely continuous with respect to $\P$, with an explicit density $f=d\QQ/d\P$
(see \cite{Molchanov-StFlour} p.273 or Theorem 1.2 in \cite{Sznitman-10lectures}).  
In particular, Birkhoff's a.s. ergodic theorem applies to additive functionals of $(\bar{\omega}_k)$ and gives for all $F:\Omega\to\R,$ s.t. $\E [|F|\times f]<\infty,$ 
 \begin{equation}\label{a.s.}
\frac 1n\sum_{k=1}^nF (\bar{\omega}_k){\longrightarrow}
 \int_\Omega F(\w)f(\w)\P(d\w)  \quad \PP-a.s. .
\end{equation}

 This constitutes the basis of the "method of the environnement viewed from the particle". To recall it briefly, let us sketch the proof of Solomon's result \eqref{eq:asvel} on the asymptotic velocity for the ballistic random walk. Let $\Delta X_n:=X_{n+1}-X_n$, 
$\FF_n=\sigma\{\Delta X_0,\ldots, \Delta X_n,\;  \w(X_0),\ldots, \w(X_{n+1})\}.$ 
We can write the classical martingale differences decomposition:
\begin{equation}\label{introdecomp}
 {X_n}/n= 1/n\sum_{k=1}^n[\Delta X_k-\EE(\Delta X_k|\FF_{k-1})]+1/n\sum_{k=1}^n\EE(\Delta X_k|\FF_{k-1}).
\end{equation}The first sum in \eqref{introdecomp} is composed of centred, uncorrelated terms ($k$-th term is  $\FF_k$ measurable ). This first sum tends to zero in $\L^2$ and, using the martingale's convergence, even a.s..
Moreover, since $$\EE(\Delta X_k|\FF_{k-1})=\w(X_k)-1(1-\w(X_k))=2\bar\w_k(0)-1, $$ for the second term of \eqref{introdecomp} we can apply Brirkhoff's theorem and using the explicit expression of $f$ \cite {Molchanov-StFlour} p.273 get: 
 $$ 1/n\sum_{k=1}^n\EE(\Delta X_k|\FF_{k-1})=1/n\sum_{k=1}^n(2\bar\w_k(0)-1)\longrightarrow \int_\Omega (2\w(0)-1)f(\w)\P(d\w) =v\quad a.s.,$$
therefore $X_n/n\to v\quad a.s.$.
 For further illustration of this method see
 \cite{Sznitman-10lectures},
 \cite{ZeitouniSF} and \cite{Bog}.
In this work we are also interested in the limits of additive functionals $\frac 1n\sum_{k=1}^nF (\overline{\omega}_k).$ Knowing such limits allows to extend the environnement's method to the recurrent case. Besides this theoretical motivation, such additive functionals arise in particular in statistical applications. 

The  empirical law $\S_n$
of the environment's chain $(\bar \w_k)$ , defined as 
\begin{equation}\label{eq:empir}   \S_n 
=  \frac 1n \sum_{k=1}^n \delta_{  \overline{\omega}_k  } \,,
\end{equation}
 allows to represent Birkhoff sum  of  $F: \Omega \to \R$ along the chain  as  an integral
$$
\frac 1n\sum_{k=1}^nF (\bar{\omega}_k) = \int_\Omega F d\S_n \, .
$$ 
Then, $\S_n$ is a  random  element of ${\mathcal P}(\Omega)$ depending both on $\omega$ and $X$.
Our main result, 
Theorem \eqref{thm:main}, states that the following convergence in distribution  in the space $ {\mathcal P}(\Omega)$ equipped with the topology of the weak convergence of probability measures holds:
\begin{equation*}
  \S_n 
 \stackrel{\LL}{\longrightarrow} 
   \S_\8, 
\end{equation*}
where the law of the random measure $\S_\8$ is precisely defined in \eqref{eq:sinfty}.
In particular, for every $F: \Omega \to \R$ continuous and bounded, the following convergence in law holds: 
\begin{equation}\label{inlaw}
\frac 1n\sum_{k=1}^nF (\bar{\omega}_k) \stackrel{\LL}{\longrightarrow} \int_\Omega F d\S_\8 \, .
\end{equation} 
Note that when $\Omega$ is equipped with the Hilbert's cube distance $d$, 
 all the functions $F$ depending only on the finite numbers of coordinates of $\w$ are continuous w.r.to $d.$ 

Despite the fact that \eqref{inlaw} gives only an "in law" version of the ergodic theorem,
 in many examples the method of the environnement viewed from the particle can be used in a very similar to the ballistic case way.
The point is that in situations where the limit in \eqref{inlaw} is deterministic, the  convergence actually  holds in probability. Hence the environment's method of the example above can be performed almost in a same way, replacing the a.s. convergence by the convergence in probability for the second sum.  We give such examples in Section \eqref{sec:ex}. On the other hand, besides the environment's method, often only the integrability properties of the limit \eqref {inlaw} are of interest, so the knowledge of its distribution can be sufficient. 
To define precisely the limit random measure $S_{\8}$ we need to introduce the notion of the potential and the infinite valley.


\subsection{Potential and infinite valley}
Let $\rho_x, x\in\Z$ be given by \eqref{eq:rho} and define the
 potential  $V = \{ V(x) \, : \,  x \in \Z \}$ 
 by 
\begin{equation} \label{equa:Pot} 
V(x) = \left\{
    \begin{array}{ll} \sum_{y=1}^x \log \rho_y & \mbox{if $x>0$,} \\ 0
& \mbox{if $x=0$,} \\ - \sum_{y=x+1}^0 \log \rho_y& \mbox{if $x<0$.}
    \end{array} \right.
\end{equation}
 Then, $V$ is a (double-sided) random walk, 
 an  example of a realisation of $V$ can be seen on Figure~\ref{figu:potential}.
Setting $C(x,x+1)=\exp[-V(x)]$,
for any  integer $x$, under quenched law the  Markov chain $X$  is an
electric network in the sense of \cite{DoSn} or \cite{LePeWi}, where
$C(x,x+1)$ is the conductance  of the (unoriented) bond $(x,x+1)$.  In
particular, the measure $\mu$ defined as 
\begin{equation*} 
\mu(0)=1,\quad \mu(x) = \exp[-V(x-1)]+\exp[-V(x)], \quad x\in\Z^*_+,
\end{equation*}

is a reversible and invariant measure for the Markov chain $X$. 
Define the right border $c_n$ of the ``valley'' with depth $\log n +
  (\log n)^{1/2}$ as the random variable
\begin{equation} \label{eq:c_n}
  c_n = \min \big\{x \geq 0 \,  : \, V(x) - \min_{0\leq y \leq x} V(y)
  \geq \log n + (\log n)^{1/2} \big\}, 
\end{equation}
and the bottom $b_n$ of the ``valley''  as
\[ 
  b_n = \min  \big\{x \geq 0 \, :  \, V(x) = \min_{0 \leq  y \leq c_n}
  V(y)\big\}. 
\]
On Figure~\ref{figu:potential}, one can see a representation of $b_n$ and
$c_n$.
The salient probabilistic feature of a recurrent RWRE is the strong localisation revealed by \cite{sinai}. Considered on the spacial scale $\ln^2 n$ the RWRE becomes localized  near $b_n. $  We are  interested in the shape of  the ``valley'' $(0,b_n,c_n)$
when  $n$ tends  to infinity  and we  recall the  concept  of infinite
valley introduced by \cite{Golosov}.

\begin{figure}
\begin{center}
\includegraphics[trim = 48mm 175mm 25mm 45mm, clip,width=\textwidth]{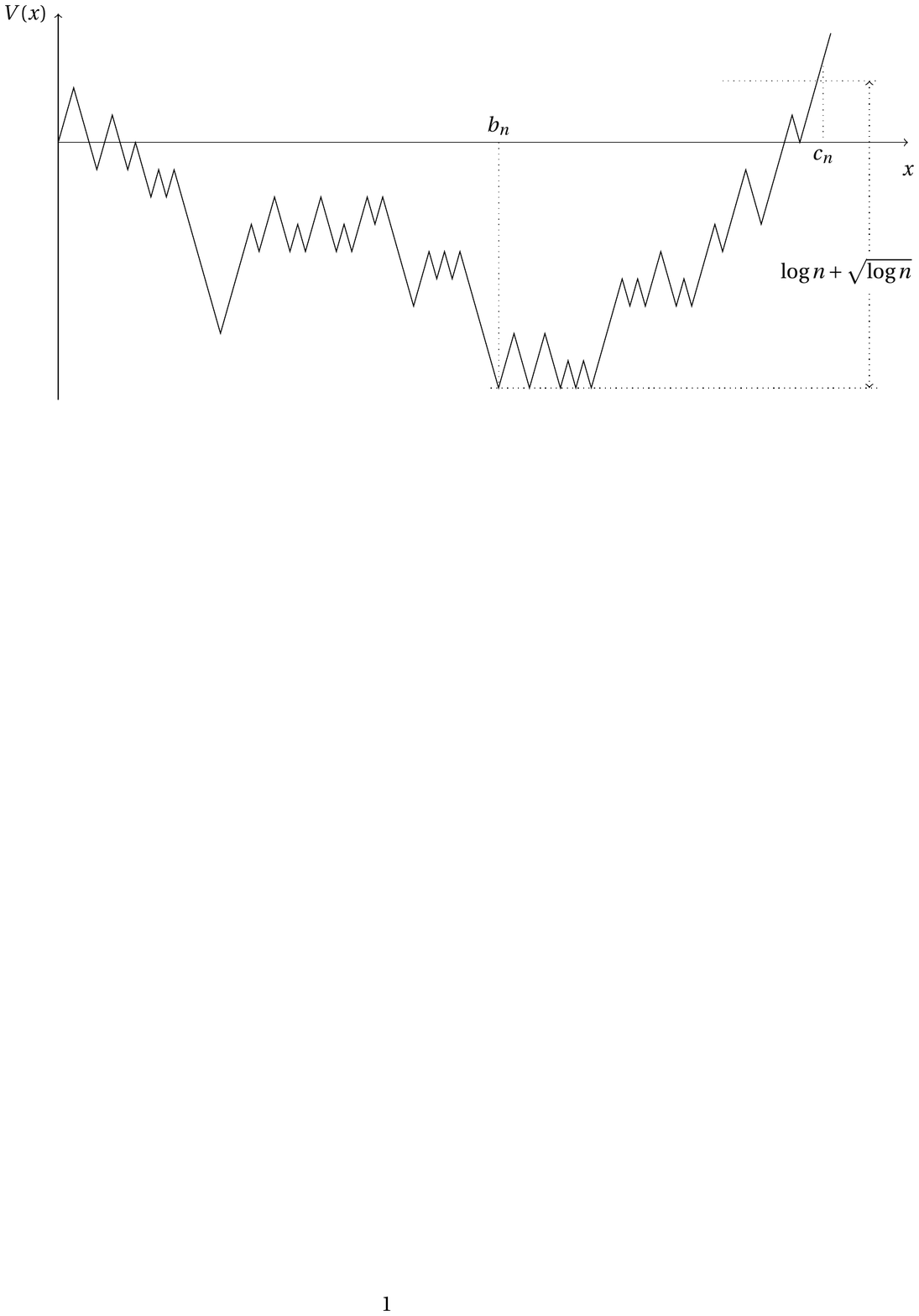}
\end{center}
\caption{Example  of  potential  derived  from  a  Temkin random  environment
  with parameter $a=0.3$. Simulation with $n=1000$.}
\label{figu:potential}
\end{figure}

Let $\widetilde  V =  \{ \widetilde V(x)  \, :  \, x \in  \Z \}$  be a
collection of random variables  distributed as $V$ conditioned to stay
positive   for   any   negative   $x$,   and   non-negative   for   any
non negative~$x$.  Such events having probability zero, a formal definition is using Doob's $h$-transform (see \cite{Golosov}[  Lemma 4], \cite{Bertoin}). 
It has been shown in \cite {Golosov},
that the finite dimensional distributions of $\{  [V(b_n+x)-V(b_n)]\1_{\{-b_n,\ldots, c_n-b_n-1\}}(x)\ ;\ x\in\Z      \}$  converges to those of $\{ \widetilde V(x)  \, :  \, x \in  \Z \},$ moreover, (\cite {Golosov}, pp. 494-495)
\begin{equation}\label{finitV}
\sum_{x\in\Z}\exp\left (-\widetilde V(x)\right )<\infty.
\end{equation}
Besides for the fidi convergence above, it is not true in general that the sequence of the  infinite vectors $\{ [ V(b_n+x)-V(b_n)]\1_{\{-b_n,\ldots, c_n-b_n-1\}}(x)\ ;\ x\in\Z   \}$ converges in law to $\{\tilde V(x),\, x\in\Z\}$.  But if we consider instead the sequence of elements of $(\ell^1, \|\cdot\|_1)$ given by
  \[
\Xi_n:= \left \{\exp[-(V(b_n+x)-V(b_n))]\1_{\{-b_n,\ldots, c_n-b_n-1\}}(x)\ ;\ x\in\Z \right. \}
 \]
 we can show ( Proposition \ref{prop:tight}) that
the sequence of laws $P_{\Xi_n}$ of $\Xi_n$ is tight, and hence  $\Xi_n$ converges in distribution to  
$\ \{\exp[-\widetilde V(x)]\ ;\ x\in\Z\}.$ This  is done in Theorem \eqref{th: weakconv}, which is 
 a key auxiliary result for the proof of Theorem \eqref{thm:main}. In the next subsection we formulate this theorem precisely. 

 \subsection{Assumptions and main result}
\begin{assu}\label{as:rec}
$\E\log \rho_0=0
$
\end{assu}
We already mentioned that under Assumption \eqref{as:rec} for $\P$-almost $\w$ the Markov chain $X$ is recurrent. We also need to assume
\begin{assu}\label{as:var}
\begin{description}
\item {(i)}
$\P(\delta_0\leq \w(0)\leq 1-\delta_0)=1 \quad \mbox{for some}\quad \delta_0\in(0,1),$
\item {(ii)}
$ \Var (\log \rho_0)>0.
$
\end{description}
\end{assu}

  The condition $(i)$ is technical and commonly admitted, whereas $(ii)$ excludes the deterministic case. 
Moreover, in the proof of Proposition \eqref {prop:tight}, Theorem\eqref{th: weakconv} and hence in Theorem \eqref{thm:main} we need to assume the following technical assumption:
\begin{assu}\label{as:aritm}
The distribution of $\log \rho_0$ is arithmetic, i.e. concentrated on $\{nh;\;  n\in\Z\},$ with some $h>0.$  
\end{assu}

%
Let $\widetilde \omega=\{\widetilde\omega(x),\ x\in\Z\}$ be the {\it environment of the walk in the infinite valley}:   
\begin{equation} \label{equa:tildeOmega} 
  \widetilde \omega (x) 
   = \frac {\exp[-\widetilde V(x)] }{
    \exp[-\widetilde V(x)] +\exp[- \widetilde V(x-1)]},\quad x \in \Z.
\end{equation}

Let $\tilde \nu$  be a  probability measure on $\Z$  defined by
\begin{equation} \label{equa:Nu} 
 \tilde\nu(x) = \frac{ \exp[-\widetilde 
   V(x-1)]+\exp[-\widetilde  V(x)] }{2\sum_{z\in  \Z} \exp[-\widetilde
   V(z)]}, \quad x \in \Z . 
\end{equation}

Thanks to \eqref{finitV} the probability measure \eqref {equa:Nu} is well defined, and is a 
  stationary (and reversible)  distribution of
a random walk in in the "infinite valley", i.e. the walk governed by the environnement $\tilde\w$.

Define for $n\in\N$ and $x\in\Z,$ the local time of the walk in the position $x$:

\begin{equation}\label{eq:nun} 
\xi(n,x)=
\sum_{k=1}^{n}\1\{X_k=x\}\;,
\end{equation}

Note that the  empirical law \eqref {eq:empir} of the environment seen from the walker can be expressed using the local time as
\begin{equation}\label{eq:empir1}   \S_n 
=
\sum_{x \in Z} \frac{\xi(n,x)}{n} \delta_{ T_x  {\omega}}\  
\,.
  \end{equation}
Denote
\begin{equation}\label{eq:sinfty}
\S_\8:=\sum_{x \in Z} \tilde\nu(x) \delta_{ T_x  \widetilde{\omega}}\,.
\end{equation}
Let $\Omega:=[0,1]^{\Z}$ be provided with the distance $d(\w,\w')=\sum_{x\in\Z}2^{-|x|}|\w(x)-\w'(x)|.$
\begin{theo}\label{thm:main}  Under Assumptions \ref{as:rec}, \ref{as:var}  and \ref{as:aritm}, the empirical law of the environment seen from the walker converges in distribution, as $n \to \8$:  
 \begin{equation}\label{eq:p1}
  \S_n 
 \stackrel{\LL}{\longrightarrow} 
   \S_\8
\end{equation}
  in the space $ {\mathcal P}(\Omega)$ equipped with the topology of the weak convergence of probability measures.
  \end{theo}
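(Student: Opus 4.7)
My plan rests on transferring the problem from the fixed-origin frame to the frame centred at the bottom $b_n$ of the valley. By Portmanteau, to establish $\S_n \stackrel{\LL}{\to} \S_\8$ in $\mathcal{P}(\Omega)$ endowed with the weak topology, it is enough to prove $\int_\Omega F\, d\S_n \stackrel{\LL}{\to} \int_\Omega F\, d\S_\8$ for every bounded Lipschitz $F$ on $(\Omega,d)$, and by density for the Hilbert-cube metric one may restrict to \emph{cylinder} $F$ depending only on $\{\w(y):|y|\leq K\}$. Using \eqref{eq:empir1} and re-indexing $y=x-b_n$,
\begin{equation*}
\int_\Omega F\, d\S_n \;=\; \sum_{y\in\Z}\frac{\xi(n,b_n+y)}{n}\,F(T_y T_{b_n}\w),\qquad \int_\Omega F\, d\S_\8 \;=\; \sum_{y\in\Z}\tilde\nu(y)\,F(T_y\tilde\w).
\end{equation*}

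The central claim I would target is the joint convergence in distribution
\begin{equation*}
\bigl(T_{b_n}\w,\;\{\xi(n,b_n+y)/n\}_{y\in\Z}\bigr)\;\stackrel{\LL}{\longrightarrow}\;(\tilde\w,\tilde\nu)
\end{equation*}
in the product space $(\Omega,d)\times(\ell^1(\Z),\|\cdot\|_1)$. Granted this, Theorem \ref{thm:main} follows from the continuous mapping theorem applied to the bilinear functional $(\w,\nu)\mapsto\sum_y\nu(y)F(T_y\w)$, which is jointly continuous on $\Omega\times\ell^1$ whenever $F$ is bounded continuous on $(\Omega,d)$ (truncate in $\ell^1$ and use $d$-continuity of the shifts).

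The environment marginal follows from Theorem \ref{th: weakconv}: after passing to a Skorokhod representation one has $\Xi_n\to\{e^{-\tilde V(x)}\}_{x\in\Z}$ a.s.\ in $\ell^1$, and the componentwise map $\{e^{-V(x)}\}\mapsto\{e^{-V(x)}/(e^{-V(x)}+e^{-V(x-1)})\}$ is continuous, so $T_{b_n}\w\to\tilde\w$ in $(\Omega,d)$; the coordinates outside the valley are negligible because $b_n,c_n-b_n\to\8$ and the Hilbert-cube weights damp the tail. The local-time marginal is the heart of the matter; I would handle it in two stages. First, a \emph{localisation} step using Sinai's theorem to show that $\sum_{|y|>M}\xi(n,b_n+y)/n\to 0$ in probability uniformly in $n$ as $M\to\8$, reducing $\ell^1$-convergence to convergence on windows $|y|\leq M$. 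Second, an \emph{equilibration} step: since $X$ is reversible with conductances $C(x,x+1)=e^{-V(x)}$ and reversible measure $\mu(x)=e^{-V(x-1)}+e^{-V(x)}$, the ratio-ergodic theorem for the quenched walk yields $\xi(n,b_n+y)/\xi(n,b_n)\to\mu(b_n+y)/\mu(b_n)$, and combined with $\sum_{|z|\leq M}\xi(n,b_n+z)/n\to 1$ this gives $\xi(n,b_n+y)/n\approx\tilde\nu_n(y)$, where $\tilde\nu_n$ is the probability measure built from $\Xi_n$ by the same formula \eqref{equa:Nu} that builds $\tilde\nu$ from $\{e^{-\tilde V(x)}\}$. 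Continuity of that construction as a map $\ell^1\to\ell^1$ together with Theorem \ref{th: weakconv} then closes the loop.

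The main obstacle will be the uniform ratio-ergodic step inside a quenched, $n$-dependent valley of depth $\log n+\sqrt{\log n}$: one must bound, in probability, the number of visits of $X$ to each site $b_n+y$ by time $n$, which in turn requires controlling excursions between $b_n+y$ and $b_n$ via the commute-time identity of the electric-network representation, and comparing the resulting count with $\mu(b_n+y)$, uniformly over $|y|\leq M$ and jointly with the environment. Assumption \ref{as:var}(i) keeps the conductances bounded away from $0$ and $\8$ and renders these estimates quantitative; Assumptions \ref{as:rec} and \ref{as:aritm} enter only through Theorem \ref{th: weakconv}. Once this quenched ratio-ergodic statement is in hand, a Slutsky-type combination with the environment convergence assembles the joint convergence and completes the proof.
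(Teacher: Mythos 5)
Your overall plan coincides structurally with the paper's: reduce to cylinder test functions, recenter at $b_n$, approximate the normalized local times $\xi(n,b_n+\cdot)/n$ by the probability built from $\Xi_n$ (your $\tilde\nu_n$, the paper's $\mu_n(b_n+\cdot)$ in \eqref{eq:mu_n}), and then push the $\ell^1$-convergence of $\Xi_n$ (Theorem \ref{th: weakconv}) through continuous maps. Your recasting as a joint limit in $\Omega\times\ell^1$ is sound but is ultimately a repackaging: both $T_{b_n}\w$ restricted to the valley and $\tilde\nu_n$ are continuous images of the single object $\Xi_n$, so joint convergence is Theorem \ref{th: weakconv} plus continuous mapping, and no extra Slutsky step is needed on that side.

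The genuine gap is the equilibration step, and it is precisely where the paper's technical work lives. A ``ratio-ergodic theorem for the quenched walk'' does not give what you need: such a theorem yields $\xi(n,x)/\xi(n,0)\to\mu(x)/\mu(0)$ for a fixed site $x$ in a fixed environment as $n\to\8$, whereas here the sites $b_n+y$ drift with $n$, and you must produce a quantitative deviation bound, uniform over the $n$-dependent valley and assessed under the annealed law. The paper (Proposition \ref{prop:deviation}) achieves this by cutting the walk on $[0,c_n]$, reflected at both ends, into i.i.d.\ excursions from $b_n$, writing $\S_n(F)-\Sigma_n(F)$ as a centred sum of excursion contributions, and applying a Kolmogorov maximal inequality; the resulting bound is useless without Lemma \ref{lem:variance}, which shows that $\widetilde\Var^{\w,n}_{b_n}(Y_x)\leq n^{1-\delta}$ for all $x\in[0,c_n]$ on an environment event of $\P$-probability $\geq 1-\eta$. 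That lemma in turn relies on the KMT strong coupling of $V$ with a Brownian motion and a Donsker argument showing the rescaled valley fluctuation $\Delta_n\vee\Delta'_n$ is strictly below $1$ in the limit; this uses that the valley depth is $\log n+\sqrt{\log n}$, not merely the ellipticity from Assumption \ref{as:var}(i). Your appeal to the commute-time identity points at the right object, but without this $n^{1-\delta}$ variance estimate the argument does not close. Similarly, your localisation step $\sum_{|y|>M}\xi(n,b_n+y)/n\to 0$ is not treated separately in the paper: it is absorbed into the same estimate through $P^\w_0(T_{c_n}\leq n)\to 0$ and $P^\w_0(T_{b_n}\geq n\eps)\to 0$ from Golosov. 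Supplying the concentration estimate is what turns your outline into a proof.
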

  Note that in particular, for every $F:\Omega\to\R$ continuous and bounded, the "weak" ergodic theorem \eqref{inlaw} holds, and therefore 
 for every $m\in\N,$
  $g:[0,1]^{2m+1}\to\R,$ continuous,
\[
\frac 1n\sum_{k=1}^ng(\omega(X_{k}-m),\ldots,  \w(X_{k}+m))
\stackrel{\LL}{\longrightarrow} \sum_{x \in \Z}g(\widetilde{\omega}(x-m),\ldots, \widetilde{\omega}(x+m))\tilde \nu(x)\;.
\]
Again in particular, for every  $f:[0,1]\to\R,$ continuous, 
\[
\frac 1n\sum_{k=1}^nf(\omega(X_{k}))
\stackrel{\LL}{\longrightarrow} \sum_{x \in \Z}f(\widetilde{\omega}(x))\tilde \nu(x)\;.
\]


 Denote by ${\mathcal E}$ the expectation with respect to the law of $\tilde V=(\tilde V(x))_{x\in\Z}$ and let us define $\IS \in {\mathcal P}(\Omega)$ by
$$
\int_\Omega F d\IS = {\mathcal E}\left[ \int_{\Omega} F d\S_\8 \right]= \sum_{x \in \Z} {\mathcal E}\big[\tilde \nu(x) F(T_x \widetilde{\omega}) \big] \;,
$$
for bounded  $F:\Omega\to\R$. We can view $\IS$ as the $\mathcal E$-expectation of $\S_\8$. 


\begin{prop} \label{thm:inv}
The probability $ \IS$ is invariant and reversible for the  Markov chain $( \overline{\omega}_k, k \geq 0)$ in $\Omega$.  The measures $\P$ and $\IS$ are mutually singular. 
\end{prop}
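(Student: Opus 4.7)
The plan is to establish the two parts of the proposition separately: first reversibility (which will immediately yield invariance), then mutual singularity, by exhibiting an event distinguishing $\P$ from $\IS$.

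For reversibility, I would test on bounded measurable $F,G:\Omega\to\R$ and compute
$$\iint F(\w)G(\w')\, R(\w,d\w')\,\IS(d\w)$$
by expanding $\IS$ and $R$ via their definitions. Using $(T_x\widetilde\omega)(0)=\widetilde\omega(x)$, $T_1T_x=T_{x+1}$, and $T_{-1}T_x=T_{x-1}$, this double integral becomes
$$\mathcal E\!\left[\sum_{x\in\Z}\tilde\nu(x)\widetilde\omega(x)F(T_x\widetilde\omega)G(T_{x+1}\widetilde\omega)\right]+\mathcal E\!\left[\sum_{x\in\Z}\tilde\nu(x)(1-\widetilde\omega(x))F(T_x\widetilde\omega)G(T_{x-1}\widetilde\omega)\right].$$
In the second sum I would reindex $y=x-1$ and invoke the detailed balance identity
$$\tilde\nu(y)\widetilde\omega(y)=\tilde\nu(y+1)\bigl(1-\widetilde\omega(y+1)\bigr)=\frac{e^{-\widetilde V(y)}}{2\sum_z e^{-\widetilde V(z)}},$$
which follows immediately from \eqref{equa:tildeOmega} and \eqref{equa:Nu}. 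After this substitution both sums combine into
$$\mathcal E\!\left[\sum_{x\in\Z}\tilde\nu(x)\widetilde\omega(x)\bigl\{F(T_x\widetilde\omega)G(T_{x+1}\widetilde\omega)+G(T_x\widetilde\omega)F(T_{x+1}\widetilde\omega)\bigr\}\right],$$
which is manifestly symmetric in $(F,G)$. Thus $\IS\otimes R$ is symmetric, giving reversibility, and invariance follows at once. Fubini's theorem is justified by boundedness of $F,G$ together with $\tilde\nu$ being a probability measure.

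For mutual singularity, consider the measurable event
$$A=\Bigl\{\w\in\Omega:\sum_{x\in\Z}\exp[-V_\w(x)]<\infty\Bigr\},$$
where $V_\w$ denotes the potential \eqref{equa:Pot} built from $\w$. Under $\IS$, the environment is $T_X\widetilde\omega$ for a random shift $X$ of law $\tilde\nu$, so $V_\w(y)=\widetilde V(X+y)-\widetilde V(X)$; by \eqref{finitV} we get $\sum_y e^{-V_\w(y)}=e^{\widetilde V(X)}\sum_z e^{-\widetilde V(z)}<\infty$ almost surely, so $\IS(A)=1$. Under $\P$, on the other hand, $V_\w$ is a two-sided random walk with $\E\log\rho_0=0$ and positive variance (Assumptions \ref{as:rec} and \ref{as:var}), hence oscillating with $\liminf_{|x|\to\infty} V_\w(x)=-\infty$ $\P$-a.s.; this produces infinitely many indices $x$ with $V_\w(x)\le 0$, each contributing $e^{-V_\w(x)}\ge 1$, so the series diverges and $\P(A)=0$. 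This yields $\P\perp\IS$.

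The only real subtlety is organizing the reversibility computation cleanly and verifying detailed balance in the infinite valley; once that is done, everything reduces to the symmetric form above. Mutual singularity is then obtained essentially for free from the contrast between the recurrence of the two-sided random walk $V$ under $\P$ and the integrability \eqref{finitV} of $e^{-\widetilde V}$ built into the infinite valley.
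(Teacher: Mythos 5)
Your reversibility computation follows the paper's own route: expand against the transition kernel $R$, reindex the second sum, and invoke the detailed balance identity $\tilde\nu(y)\tilde\omega(y)=\tilde\nu(y+1)(1-\tilde\omega(y+1))$, which you verify correctly from \eqref{equa:tildeOmega}--\eqref{equa:Nu}; deducing invariance by setting $G\equiv1$ also matches the paper's own closing remark (the paper in addition re-derives invariance independently, by passing to the limit in $\mathbf{E}\int RF\,d\S_n=\mathbf{E}\int F\,d\S_n+\mathcal O(\|F\|_\infty/n)$ via Theorem~\ref{thm:main}, a step you harmlessly omit once reversibility is in hand). The genuine divergence is in the choice of singularity event. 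The paper takes $\Omega_+=\{\omega:V_\omega(x)\geq0\ \forall x\geq1\}$ and infers $\IS(\Omega_+)=1$ from $\widetilde V\geq0$; but under $\IS$ the environment is $T_X\widetilde\omega$ with a random recentring $X\sim\tilde\nu$, so $V_\omega(y)=\widetilde V(X+y)-\widetilde V(X)$, and this is strictly negative at $y=-X$ whenever $X<0$ (since $\widetilde V(0)=0<\widetilde V(X)$). Thus $\IS(\Omega_+)<1$, and the paper's set does not quite do the job as written. Your event $A=\{\omega:\sum_x e^{-V_\omega(x)}<\infty\}$ is insensitive to that recentring --- the factor $e^{\widetilde V(X)}$ pulls out of the sum --- so $\IS(A)=1$ is immediate from \eqref{finitV}, while $\P(A)=0$ follows, as you argue, from the a.s.\ oscillation of the mean-zero, non-degenerate walk $V$ under Assumptions~\ref{as:rec} and~\ref{as:var}. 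Your choice is the more robust one and yields a complete argument for mutual singularity.
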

 The invariant probability, which is a  limit law in the ballistic case, 
 is absolutely continuous with respect to the law of the environment $\P,$  see \cite[P. 273]{Molchanov-StFlour}. 
The one we find here is the first one to be obtained as a limit in the case of zero velocity, and it is singular with respect to $\P.$ 

The proof of Theorem \eqref{thm:main} is partially inspired by the paper \cite{GPS} concerning the convergence of centred  local times: $(\frac{\xi(n,b_n+x)}{n},\; x\in\Z)$ to $(\tilde\nu(x)),\; x\in\Z$, but the main ingredient, Proposition \eqref{prop:main} giving the tightness of $\left \{\exp[-(V(b_n+x)-V(b_n))]\1_{\{-b_n,\ldots, c_n-b_n-1\}}(x)\ ;\ x\in\Z \right \}$ is new. In its turn, one part of the proof of Proposition \eqref{prop:main} is inspired by the paper of \cite{riter}on the growth of random walks conditioned to stay positive.
\subsection{Structure of the paper}
In Section\eqref{sec:ex} we show how the environment's method can be deduced from Theorem\eqref{thm:main}. Namely we  proove the LLN (Proposition \eqref{prop:lln}) and the mixed CLT(Proposition \eqref{prop:clt}) for sums  $ \sum_{k=1}^nf(\Delta X_k). $   Section \eqref{sec:profthm1} is focused on the proof of Theorem \eqref{thm:main}. Proposition \ref{thm:inv} is proven in Section\eqref{sec:inv}. Auxiliary results for the proof of Theorem \ref{thm:main}, and in particular Proposition \eqref{prop:main} are proven in Section \eqref{sec:aux}. 
\section {Examples: environnement's method}\label{sec:ex}

\subsection{Law of large numbers for functions of the steps}\label{subsec:LLN}
\begin{prop}\label{prop:lln}
Let $f:\{-1;1\}\rightarrow \R.$ Denote $\Delta X_k=X_{k+1}-X_k,$  $k\in\N.$ Then the following convergence in annealed probability  holds :
\begin{equation*}
\frac 1n\sum_{k=1}^nf(\Delta X_k)\stackrel{\PP}{\longrightarrow}\frac{f(1)+f(-1)}2,\quad \quad {n\to\infty}.
\end{equation*}
In particular, $X_n/n\stackrel{\PP}{\longrightarrow}0,\quad{n\to\infty}.$
\end{prop}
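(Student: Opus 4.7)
The approach is to transport the environment-method martingale argument sketched after \eqref{introdecomp} to the recurrent setting. Decompose
\begin{equation*}
\sum_{k=1}^n f(\Delta X_k) = M_n + A_n,\qquad A_n := \sum_{k=1}^n \EE\bigl(f(\Delta X_k)\mid \FF_{k-1}\bigr),
\end{equation*}
where $M_n$ is a martingale with increments bounded by $2\|f\|_\8$, so $M_n/n \to 0$ almost surely. Away from the reflecting barrier the conditional expectation equals $f(-1)+(f(1)-f(-1))\bar\w_k(0)$, while at $X_k=0$ reflection forces it to be $f(1)$; the total error thus introduced is bounded by $C\,\xi(n,0)/n$, which is negligible because Sina\"i's localisation concentrates the walk in a $\log^2 n$-window around $b_n\to\8$ in $\PP$-probability. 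This gives
\begin{equation*}
\frac{1}{n}\sum_{k=1}^n f(\Delta X_k) = f(-1) + (f(1)-f(-1))\,\frac{1}{n}\sum_{k=1}^n \bar\w_k(0) + o_{\PP}(1).
\end{equation*}

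I then apply Theorem \ref{thm:main} to the bounded, $d$-Lipschitz test function $F(\w)=\w(0)$. The ``in law'' ergodic theorem \eqref{inlaw} reads
\begin{equation*}
\frac{1}{n}\sum_{k=1}^n \bar\w_k(0) \stackrel{\LL}{\longrightarrow} \int_\Omega \w(0)\,d\S_\8 = \sum_{x \in \Z} \tilde\nu(x)\,\widetilde\w(x),
\end{equation*}
and, using the explicit formulas \eqref{equa:tildeOmega}--\eqref{equa:Nu}, the product telescopes:
\begin{equation*}
\tilde\nu(x)\,\widetilde\w(x) = \frac{\exp[-\widetilde V(x)]}{2\sum_{z \in \Z} \exp[-\widetilde V(z)]},
\end{equation*}
so that $\sum_x\tilde\nu(x)\widetilde\w(x)=1/2$ pathwise. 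A weak limit equal to a deterministic constant automatically holds in probability; combining with the previous display yields $\tfrac1n\sum_{k=1}^n f(\Delta X_k) \stackrel{\PP}{\longrightarrow} (f(1)+f(-1))/2$. Specialising to $f(\pm 1)=\pm 1$ gives $X_n/n \stackrel{\PP}{\longrightarrow} 0$.

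The heavy lifting is done by Theorem \ref{thm:main}; the only step specific to this proposition is the boundary correction $\xi(n,0)/n \to 0$, which is the main (minor) obstacle to a clean write-up, and is a standard consequence of Sina\"i localisation (or can be read off from the tightness of $\Xi_n$, Proposition \ref{prop:tight}, which forces $\S_\8$ to sit near the random valley bottom rather than near the reflecting origin).
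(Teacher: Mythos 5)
Your proof is correct and takes the same route as the paper: the martingale-difference decomposition \eqref{eq:decompos}, the application of Theorem \ref{thm:main} to $F(\w)=\w(0)$, the telescoping identity $\sum_{x}\tilde\nu(x)\tilde\w(x)=1/2$, and the observation that a deterministic weak limit upgrades to convergence in probability. You add one refinement the paper's proof silently skips: the identity $\EE(f(\Delta X_k)\mid\FF_{k-1})=f(1)\w(X_k)+f(-1)(1-\w(X_k))$ fails at $X_k=0$ (where $p^{\w}(0,1)=1$, not $\w(0)$), and your $O(\xi(n,0)/n)$ boundary term, negligible by Sina\"i localisation since $b_n\to\8$, closes that small gap.
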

\begin{proof}
Denote \begin{equation}\label{eq:Fk}
\FF_n=\sigma\{\Delta X_0,\ldots, \Delta X_n,\;   \w({X_0}),\ldots, \w({X_{n+1}})\}
\end{equation}
and let's write the martingale difference decomposition :
\begin{equation}\label{eq:decompos}
\frac 1n\sum_{k=1}^nf(\Delta X_k)=\frac 1n\sum_{k=1}^n\left(f(\Delta X_k)-\EE(f(\Delta X_k)|\FF_{k-1})\right)+\frac 1n\sum_{k=1}^n\EE(f(\Delta X_k)|\FF_{k-1}).
\end{equation}
Then $D_k:=f(\Delta X_k)-\EE(f(\Delta X_k)|\FF_{k-1});\, k\in\N$ are centred, uniformly bounded and  non-correlated, (the last can be immediately seen for $D_k$ and $ D_m$, $k<m$  by conditioning on $\FF_{m-1}).$  Hence 
\begin{equation}\label{eq:l2}
 \frac 1n\sum_{k=1}^n\left(f(\Delta X_k)-\EE(f(\Delta X_k)|\FF_{k-1})\right)\stackrel{\L^2}{\longrightarrow} 0. 
 \end{equation}
%
%
Remark that 
$$\EE(f(\Delta X_k)|\FF_{k-1}))=f(1)\w({X_k})+f(-1)(1-\w({X_k})).$$
Theorem \ref{thm:main} gives the following convergence in distribution :
\begin{equation}\label{eq:indist}
\frac 1n\sum_{k=1}^n\EE(f(\Delta X_k)|\FF_{k-1})\stackrel{\LL}{\longrightarrow} \sum_{x\in\Z}\left(f(1)\tilde \w(x)+
f(-1)(1-\tilde \w(x))\right )\tilde\nu(x)=\frac{f(1)+f(-1)}2
.
\end{equation}
 Indeed, using the definitions \eqref{equa:Nu} and \eqref{equa:tildeOmega} ,
$$\sum_{x\in\Z}\tilde \w(x)\tilde\nu(x)=\sum_{x\in\Z}(1-\tilde \w(x))\tilde\nu(x). $$  Using \eqref{eq:l2} and \eqref{eq:indist} together in \eqref{eq:decompos} this completes the proof.

\end{proof}

\begin{prop}\label{prop:clt}
Let $f:\{-1;1\}\rightarrow \R$ and $(\FF_n)$ defined by \eqref{eq:Fk}.
Then the following mixing CLT holds:
\begin{equation}\label{eq:clt}
\frac 1{\sqrt n}\sum_{k=1}^n\left(f(\Delta X_k)-\EE(f(\Delta X_k)|\FF_{k-1})\right)\stackrel{\LL}{\longrightarrow} Z,
\end{equation}
where $Z$ is a random variable with the characteristic function $\phi_Z(t)=\EE(\exp(-\frac 12 \eta^2t^2),$ and
$\eta$ is a random variable defined by: $\eta^2 \stackrel{\LL}{=}(f(1)-f(-1))^2\sum_{x\in\Z}\tilde \w(x)(1-\tilde \w(x))\tilde\nu(x)$.
That is $Z\stackrel{\LL}{=}\eta U$ where $\eta$ and $U$ are independent and $U\sim{\cal{N}}(0,1).$
\end{prop}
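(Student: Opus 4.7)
The plan is to combine a bounded--increments martingale central limit argument with Theorem~\ref{thm:main} to identify the random limiting variance. Set $D_k:=f(\Delta X_k)-\EE(f(\Delta X_k)|\FF_{k-1})$ and $M_n:=\sum_{k=1}^n D_k$, so that $(D_k,\FF_k)$ is an annealed martingale difference sequence with $|D_k|\leq 2\|f\|_\infty$. Since $\PP(\Delta X_k=1|\FF_{k-1})=\w(X_k)$, the Bernoulli variance computation gives
\[
\sigma_k^2:=\EE(D_k^2|\FF_{k-1})=(f(1)-f(-1))^2\,\w(X_k)\bigl(1-\w(X_k)\bigr),
\]
which is deterministically bounded above by $(f(1)-f(-1))^2/4$.

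Next I would apply Theorem~\ref{thm:main} to the bounded continuous functional $F(\w):=\w(0)(1-\w(0))$ on $(\Omega,d)$, which depends only on the coordinate $\w(0)$ and hence is $d$--continuous. Writing
\[
V_n:=\frac{1}{n}\sum_{k=1}^n\sigma_k^2=(f(1)-f(-1))^2\int_\Omega F\,d\S_n,
\]
Theorem~\ref{thm:main} together with \eqref{eq:sinfty} yields $V_n\stackrel{\LL}{\longrightarrow}\eta^2$. The essential feature for what follows is that, in addition to this convergence in law, $V_n$ is uniformly bounded.

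The conclusion is then reached through the conditional characteristic function. Using $|D_k|\leq 2\|f\|_\infty$, a third--order Taylor expansion of $e^{itD_k/\sqrt n}$ followed by conditioning on $\FF_{k-1}$ gives
\[
\EE\bigl(e^{itD_k/\sqrt n}|\FF_{k-1}\bigr)=1-\frac{t^2\sigma_k^2}{2n}+r_{n,k}(t),\qquad |r_{n,k}(t)|\leq C_f|t|^3 n^{-3/2},
\]
where the bound on $r_{n,k}$ is uniform in $\w$. Iterated conditioning along the filtration turns $\EE e^{itM_n/\sqrt n}$ into a telescoping product, and taking a principal branch of the logarithm of each factor (legitimate for $n$ large enough, uniformly in $\w$, since each factor equals $1+O(1/n)$) yields
\[
\EE\,e^{itM_n/\sqrt n}=\EE\exp\!\bigl(-\tfrac{t^2}{2}V_n\bigr)+o_n(1).
\]
Since $x\mapsto e^{-t^2x/2}$ is bounded continuous on $[0,\infty)$ and $V_n\stackrel{\LL}{\longrightarrow}\eta^2$, the Portmanteau theorem forces the right--hand side to converge to $\EE\exp(-t^2\eta^2/2)$, which is precisely the characteristic function of $\eta U$ with $U\sim\NN(0,1)$ independent of $\eta$. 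The convergence \eqref{eq:clt} then follows from L\'evy's continuity theorem.

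The main obstacle is the telescoped Taylor step: because we have only convergence in law of $V_n$ (not in probability), the standard Hall--Heyde martingale CLT does not apply as a black box, and the characteristic function computation has to be carried out by hand. Fortunately the uniformly bounded increments make the residuals uniformly $O(n^{-1/2})$ in $\omega$, so the comparison of the complex product with $\exp(-t^2V_n/2)$ is purely deterministic and the passage to the limit reduces to the bounded continuous convergence handled above.
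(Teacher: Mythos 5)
Your decomposition into martingale differences and the identification $\sigma_k^2=(f(1)-f(-1))^2\,\w(X_k)(1-\w(X_k))$, as well as the application of Theorem~\ref{thm:main} to get $V_n=\frac1n\sum_k\sigma_k^2\stackrel{\LL}{\to}\eta^2$, all match the paper. But the crux of your argument, the passage from the Taylor expansion to
$\EE\,e^{itM_n/\sqrt n}=\EE\exp(-\tfrac{t^2}2 V_n)+o_n(1)$,
contains a genuine gap. The "iterated conditioning turns $\EE\,e^{itM_n/\sqrt n}$ into a telescoping product" step does not go through: the natural interpolation $W_k:=e^{itM_k/\sqrt n}\prod_{j>k}\phi_j$ with $\phi_j=\EE(e^{itD_j/\sqrt n}\mid\FF_{j-1})$ has the property $\EE(W_k-W_{k-1})=\EE\bigl[\bigl(\prod_{j>k}\phi_j\bigr)e^{itM_{k-1}/\sqrt n}(e^{itD_k/\sqrt n}-\phi_k)\bigr]$, and you would like to pull $\prod_{j>k}\phi_j$ out of the conditional expectation on $\FF_{k-1}$ to kill the bracket. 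You cannot: $\phi_j$ for $j>k$ depends on $\sigma_j^2=c\,\w(X_j)(1-\w(X_j))$, hence on $X_j$ for $j>k$, which is not $\FF_{k-1}$-measurable. The same obstruction appears if you telescope with $e^{t^2\sigma_k^2/(2n)}$ factors: that route gives only $\EE\bigl[e^{itM_n/\sqrt n}\,e^{t^2V_n/2}\bigr]\to 1$, and one cannot deduce $\EE\,e^{itM_n/\sqrt n}-\EE\,e^{-t^2V_n/2}\to 0$ from it because $e^{-t^2V_n/2}$ is not $\FF_0$-measurable and is correlated with the forward increments.

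This gap is not merely technical: without an extra stability condition, a martingale difference array with $V_n\stackrel{\LL}{\to}\eta^2$ (but not in probability) need not satisfy the mixing CLT with that $\eta^2$. A standard counterexample takes i.i.d.\ Rademacher $\xi_k$, $D_{nk}=\xi_k/\sqrt n$ for $k\le n/2$, and then $D_{nk}=2\xi_k/\sqrt n$ or $\xi_k/(2\sqrt n)$ for $k>n/2$ according to the sign of $S_{n,n/2}$; here $V_n$ converges in law to a two-point distribution but $M_n/\sqrt n$ converges to a skewed limit, not to a scale mixture of normals. The paper closes exactly this hole by invoking Theorem~3.4 of Hall and Heyde, whose substitute for the nesting condition is verified explicitly: one enriches the filtration to $\GG_{n,i}:=\FF_i\vee\GG_n$ with $\GG_n=\sigma(\w(X_0),\ldots,\w(X_{n+1}))$ (so $u_n^2$ is $\GG_n$-measurable) and checks that $\EE(D_{ni}\mid\GG_{n,i-1})=0$, i.e.\ the increments remain martingale differences after conditioning on the environment along the path. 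This is the non-obvious structural fact that your proof neither states nor uses, and it is what validates the conclusion. Incidentally, your remark that "the standard Hall--Heyde martingale CLT does not apply as a black box" because the limiting variance is only a distributional limit is itself mistaken: Theorem~3.4 of Hall and Heyde is designed precisely for that situation, and the paper does apply it as a black box once condition~(3.29) is verified.
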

\begin{proof}
In this proof we will rely on Theorem 3.4 from \cite{Hall_Heyde}.  Define for $k=1,\ldots, n$
$$D_{nk}:=\frac 1{\sqrt n}\left (f(\Delta X_k)-\EE(f(\Delta X_k)|\FF_{k-1})\right )=\frac 1{\sqrt n}\left (f(\Delta X_k)-f(1)\w({X_k})-f(-1)(1-\w({X_k)})\right ).$$
Let $S_{n0}=0,$  $S_{ni}=\sum_{k=1}^{i}D_{nk}$ and  put for $i=1,\ldots, n,$ $\FF_{ni}:=\FF_i.$ Then $S_{ni} $ is adapted to $\FF_{i}.$ Let $U_{ni}^2=\sum_{k=1}^{i}D_{nk}^2.$ 
Denote
$$\GG_n=\sigma\{  \w({X_0}),\ldots, \w({X_{n+1}})  \},$$
Clearly $\GG_n\subset\FF_n.$  Let for $i=1,\ldots, n,$
$$\GG_{n,i}:=\FF_{ni}\vee\GG_n=\FF_i\vee\GG_n=\sigma(\Delta X_1,\ldots,\Delta X_i,\w({X_0}),\ldots, \w({X_{n+1}})\}.$$
Next we have
\begin {equation}\label{eq:estimate}
 \max_{i=1\ldots n}|D_{ni}|\leq \frac {2\|f\|}{\sqrt n}\longrightarrow 0\quad\mbox{ and}\quad  \EE\left (\max_{i=1\ldots n} |D_{ni}|^2\right ) \leq \frac {2\|f\|^2}{ n}.
 \end{equation}{eq:estimate}
Define a random sequence $(u_n^2)$ by
$$u_n^2=\sum_{k=1}^n\EE\left( D_{nk}^2|\FF_{k-1}\right).$$
It is easy to see that
 $$
\EE\left( D_{nk}^2|\FF_{k-1}\right)=\frac 1n(f(1)-f(-1))^2\w({X_k})(1-\w({X_k})),$$
hence the sequence  $(u_n^2)$ is $(\GG_n)-$ adapted.
In order to show the convergence in probability ( the condition of Theorem 3.4 $(3.28)$ from \cite{Hall_Heyde}):
\begin{equation}\label{eq:eqproba}
U_{nn}^2-u_n^2=\sum_{k=1}^n\left(D_{nk}^2-\EE(D_{nk}^2|\FF_{k-1})\right )\stackrel{\P}{\longrightarrow} 0,
\end{equation}
we remark that $\left (D_{nk}^2-\EE(D_{nk}^2|\FF_{k-1})\right );\ k=1,\ldots, n,$ are centred, $(\FF_k)-$ adapted  and non correlated. Indeed, if $k<m,$ then $k\leq m-1$ and
\begin{align*}
&\EE\left [ \left (D_{nk}^2-\EE(D_{nk}^2|\FF_{k-1})\right )\left (D_{nm}^2-\EE(D_{nm}^2|\FF_{m-1})\right )    \right ]=\\
&\EE\left [ \left (D_{nk}^2-\EE(D_{nk}^2|\FF_{k-1})\right )\EE\left [\left (D_{nm}^2-\EE(D_{nm}^2|\FF_{m-1})\right ) |\FF_{m-1} \right ]  \right ]=0.
\end{align*} 
Hence, using \eqref{eq:estimate}, $U_{nn}^2-u_n^2$ converges to $0$ in $\L^2$, and hence in probability:
\begin{align*}
\EE \left( \sum_{k=1}^n\left(D_{nk}^2-\EE(D_{nk}^2|\FF_{k-1})\right )  \right )^2=\sum_{k=1}^n\EE\left(D_{nk}^2-\EE(D_{nk}^2|\FF_{k-1})\right )^2\leq \frac {2n\|f\|^4}{ n^2}\to 0.
\end{align*}
Then for all $i=1\ldots n,$ $\E\left ( D_{ni}|\GG_{n,i-1}\right)=0,$  hence the condition $(3.29)$ of Theorem 3.4 from \cite{Hall_Heyde} follows.


  Applying Theorem \eqref{thm:main} we see that 
  $$u_n^2=(f(1)-f(-1))^2\frac 1n\sum_{k=1}^n\w({X_k})(1-\w({X_k}))\stackrel{\LL}{\longrightarrow} (f(1)-f(-1))^2\sum_{x\in\Z}\tilde\w(x)(1-\tilde\w(x))\tilde\nu(x).$$Then, using \eqref{eq:eqproba},
  $$U^2_{nn}=(U^2_{nn}-u_n^2)+u_n^2\stackrel{\LL}{\longrightarrow}\eta^2:= (f(1)-f(-1))^2\sum_{x\in\Z}\tilde\w(x)(1-\tilde\w(x))\tilde\nu(x).$$
and the theorem follows.
\end{proof}

\section{Proof of Theorem \ref{thm:main}}\label {sec:profthm1}

\begin{proof} 
  By definition, claim \eqref{eq:p1} is equivalent to 
\begin{equation}\label{cvSh}
\lim_{n\to \8} {\mathbf E} G( \S_n)=  {\mathcal E} G(\S_\8)
\end{equation}
 for all bounded continuous $G: {\mathcal P}  (\Omega) \to \R$. We first observe that   it is sufficient to prove \eqref{cvSh}  for all $G$ of the form 
\begin{equation}\label{Gsimple}
G( {\mathcal S} ) = \sum_{l=1}^n\left( \int_\Omega F_1 d {\mathcal S}\times\ldots\times  \int_\Omega F_l d {\mathcal S}\right)
\end{equation}
with arbitrary integers $n$, $m,$ $l$  and $F_k: [0,1]^{2m+1} \to \R$ 
continuous ($1\leq k \leq l$).
 Indeed, let $d$ be a distance on $\Omega$ defined by
$$d(\w,\w')=\sum_{x\in\Z}\frac 1{2^x}|\w(x)-\w'(x)|.$$ 
Then $(\Omega, d)$ is a compact separable metric space and hence $({\mathcal P}  (\Omega),\rho)$, endowed with the Prohorov metric $\rho$, is a compact separable metric space too.
The set ${\mathcal G}$ of functions $G$ of the form \eqref{Gsimple}  is an algebra of continuous functions on the compact metric space   ${\mathcal P}  (\Omega)$  which contains constant functions and separates the points. 
By Stone-Weierstrass Theorem, this set is dense in the space  ${\mathcal C} ( {\mathcal P}  (\Omega); \R )$ for the supremum norm, and then it suffices to prove \eqref{cvSh} for such $G$'s.
This, in turn, is equivalent to prove the convergence in distribution:
\begin{equation}\label{Fsimple}
\left( \int_\Omega F_1 d {\mathcal S}_n,\ldots,  \int_\Omega F_l d {\mathcal S}_n\right) \stackrel{\LL}{\longrightarrow} 
\left( \int_\Omega F_1 d {\mathcal S}_\8,\ldots,  \int_\Omega F_l d {\mathcal S}_\8 \right) 
\end{equation}
as $n \to \8$.
Indeed, using Cramer-Wold device \eqref {Fsimple}  is equivalent to 
\begin{equation*}
\forall (t_1,\ldots t_l)\in\R^l,\quad 
\sum_{i=1}^l t_i \int_\Omega F_i d {\mathcal S}_n \stackrel{\LL}{\longrightarrow} 
\sum_{i=1}^l t_i \int_\Omega F_i d {\mathcal S}_\8,
\end{equation*}
 and finally, as $\sum_{i=1}^l t_i F_i$ is a continuous function on $\Omega$, depending only on the finite number of coordinates, we only need to prove that 
\begin{equation}\label {FFsimple}
\forall m\in\N^*,\;\forall F\in\C_b([0,1]^{2m+1}),\;  \quad\quad
 \int_\Omega F d\S_n\stackrel{\rm law}{\longrightarrow} \int_\Omega F d {\mathcal S}_\8.
\end{equation}
Bellow we we give the proof of \eqref{FFsimple}, wich is separated on $3$ main steps.\\
{\bf Step 1:{\it Approximation in probability of $\int_\Omega F d\S_n$.}}\\ 
 For $F$ as in \eqref{FFsimple}, using \eqref {eq:nun}  and \eqref{eq:empir1} let's write $\S_n(F)$ in the "spatial" form 
 \begin{equation}\label{eq:SnFspat}
\S_n(F)=\sum_{x\in\Z}F(T_x \omega) \frac{\xi(n,x)}{n}=\sum_{x\in\Z}F(\w({-m+x}),\ldots,\w({m+x})) \frac{\xi(n,x)}{n}.
 \end{equation}
Fix $n\in \N^*$ and denote $\mu_n=\mu_n^\w$ the random probability measure  on $\Z_{+}$ 
\begin{equation}\label{eq:mu_n}
\mu_n(x):=\left\{
\begin{array}{ll}\frac1{Z_n}\left(e^{-V(x)}+e^{-V(x-1)}\right)&\mbox{if $0<x<c_n$,}\\
\frac1{Z_n}&\mbox{if $x=0$,}\\
\frac1{Z_n}e^{-V(c_n-1)}&\mbox{if $x=c_n$,}\\
0&\mbox{if $x\notin \{0,\ldots,c_n\},$}
\end{array}\right.
\end{equation}
where $Z_n=2\sum_{x=0}^{c_n-1}e^{-V(x)},$ where $c_n$ and $V$ are respectively defined  by \eqref{eq:c_n} and \eqref{equa:Pot}.
The  point is that the local times $\frac{\xi(n,x)}{n} , \, x \in \Z_+ $
 can be approached in probability by the quantities $\mu_n(x),\ x\in\Z_+$ . This argument was found by \cite {GPS}.  Here we show that more generally, the additive functional $\S_n(F)$
 can be approached in probability by $\Sigma_n(F)=\int_{\Omega}Fd\Sigma_n$ with
 \begin{equation} \label{def:Sigma_n}
 \Sigma_n=
\sum_{x \in Z} \mu_n(x)\delta_{ T_x  {\omega}}\;.
\end{equation}
  Namely, Proposition \ref{prop:deviation} states that $\forall \eps >0,$
 \begin{equation}\label{eq:cvproba}
 \PP\left(\left|\S_n(F)- \Sigma_n(F)
\right|>\eps  \right)\to 0 .
 \end{equation}
 Note that $\S_n$ depends on the walk and on the environment, whereas $\Sigma_n$ depends only on the environment. 
The next three steps allow to show the convergence in law:  
 $
\Sigma_n(F)\stackrel{\LL}{\longrightarrow} \S_\8 (F).$ \\
{\bf Step 2:} {\it Expressing 
   $\Sigma_n(F)$
   as a continuous function of a weakly convergent sequence.}\\
   Note that
   \[
  \Sigma_n(F)=\sum_{x\in\Z} F(T_x \omega)\mu_n(x)=\sum_{x\in\Z} F(T_{b_n+x} \omega)\mu_n(b_n+x).\]
Denote by $\Xi_n$ the random element in $\ell^1$ given by:
  \[
 \Xi_n:=\left \{\exp[-(V(b_n+x)-V(b_n))]\1_{\{-b_n,\ldots, c_n-b_n-1\}}(x)\ ;\ x\in\Z \right. \}.
 \]
Both  $\mu_n(b_n+x)$ and $\omega(b_n+x):\ x=-b_n,\ldots, c_n-b_n-1\ $ 
can  be expressed in terms of $\Xi_n$:  
\begin{align}
\mu_n(b_n+x)
=&\frac {\Xi_n(x)+\Xi_n(x-1)}{2\sum_{y\in\Z} \Xi_n(y)}
,\end{align}
and
\begin{align} \label{equa:tildeOmegab} 
   \omega (b_n+x) 
    =&\frac{\Xi_n(x)}{\Xi_n(x)+\Xi_n(x-1)}.
\end{align}
Thus, $\Sigma_n(F)= H_F( \Xi_n)$
where $H_F:{\ell^1}\to\R$ is continuous.
%
 In Theorem \ref{th: weakconv} we show that the distribution of $\Xi_n$ converges weakly to that of $\{\exp[-\widetilde V(x)]\ ;\ x\in\Z\}$ in this space. 
 Together with the continuity on $\ell_1$ of $\Sigma_n(F)= H_F( \Xi_n)$  that gives the convergence in law 
 \begin{equation}\label{eq:cvH}
\Sigma_n(F)\stackrel{\LL}{\longrightarrow}\S_\8(F).
\end{equation}
{\bf Step 3} {\it Conclusion:} Using \eqref{eq:cvproba} and \eqref{eq:cvH} we conclude that 
$  \S_n(F) \stackrel{\LL}{\longrightarrow}
 \S_\8(F).
$
This ends the proof of Theorem \ref{thm:main}. \end{proof}

\section{Auxiliary results for the Proof of Theorem \ref{thm:main}} \label{sec:aux}
\subsection{Approximation in probability}\label{sec:dev}
Let $\mu_n$ be given by \eqref {eq:mu_n}. Note that $\mu_n$ is a probability measure and that it is invariant for the chain $\tilde \X^n=(\tilde X_t^n)_{,t\in\N}$   with value in $\{0,\ldots, c_n\}$ and with transition density given by
$\tilde p^{\w,n}:\{0,\ldots, c_n\}^2\to [0,1]$  given  by:
 $$\tilde p^{\w,n}(0,1)=\tilde p^{\w,n}(c_n,c_{n-1})=1$$ and if $\ x\in \{1,\ldots,c_n-1\};$
\[\tilde p^{\w,n}(x,x+1)=\w_x\;, \quad \ \tilde p^{\w,n}(x,x-1)=1-\w_x\;.\]
For $x\in \{0,\ldots, c_n\}$ we denote  by $\tilde P^{\w,n}_x$  the law on $\{0,\ldots, c_n\}^{\N}$ of the Markov chain $\tilde\X^n$ starting from $x$.

\begin{prop}\label{prop:deviation}
Let $\S_n$ given by \eqref{eq:empir1} and $\Sigma_n$ by\eqref{def:Sigma_n}.
For all $F:[0,1]^{2m+1}\to\R$ continuous, all $\eps >0 $ 
we have
\[\PP\left(\left|  \S_n(F) - \Sigma_n(F)\right|>\eps  \right)\to 0 .\]
\end{prop}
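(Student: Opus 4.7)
Since $F$ is continuous on the compact cube $[0,1]^{2m+1}$ it is bounded, so with $M:=\|F\|_\infty$,
$$
|\S_n(F)-\Sigma_n(F)|=\Bigl|\sum_{x\ge 0}F(T_x\omega)\bigl(\xi(n,x)/n-\mu_n(x)\bigr)\Bigr|\le M\sum_{x\ge 0}\bigl|\xi(n,x)/n-\mu_n(x)\bigr|.
$$
Thus the plan is to show that the total variation distance between the empirical occupation $\xi(n,\cdot)/n$ of $(X_k)_{k\le n}$ and the probability $\mu_n$ converges to $0$ in annealed probability. I would proceed in three stages.

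\emph{Confinement to $[0,c_n]$.} Let $A_n:=\{\sup_{k\le n}X_k<c_n\}$. By the standard quenched hitting time formula for birth--death chains and the barrier height $V(c_n)-V(b_n)\ge\log n+(\log n)^{1/2}$ imposed in the definition of $c_n$, one obtains $E^\omega_0[T_{c_n}]\ge c\,n\,e^{(\log n)^{1/2}}$ on events of $\PP$-probability tending to $1$, and hitting-time tail estimates for $T_{c_n}$ then give $\PP(A_n^c)\to 0$. On $A_n$ the walk $(X_k)_{k\le n}$ coincides pathwise with the doubly reflected chain $\tilde X^n$, whose invariant law is exactly $\mu_n$, so from here on one may argue with $\tilde X^n$.

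\emph{Ergodicity and $\ell^1$ control.} By Proposition~\ref{prop:tight}, $\mu_n(b_n)$ stays bounded away from $0$ in $\PP$-probability. Using returns of $\tilde X^n$ to $b_n$ as renewal times, the excursions under $\tilde P^{\omega,n}_{b_n}$ are i.i.d.\ with mean length $1/\mu_n(b_n)=O(1)$; thus in time $n$ there are of order $n$ completed excursions, and a second-moment estimate for sums of i.i.d.\ excursion occupation times yields, for each fixed $y\in\Z$, $\xi(n,b_n+y)/n-\mu_n(b_n+y)\to 0$ in annealed probability --- this is essentially the argument of \cite{GPS} for pointwise local times. To pass to the full $\ell^1$ sum, split according to $|x-b_n|\le K$ or $>K$. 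The finite window vanishes in probability by the pointwise result; for the tail, $\sum_{|x-b_n|>K}\mu_n(x)$ can be made arbitrarily small uniformly in $n$ with high $\PP$-probability, again by Proposition~\ref{prop:tight} (tightness of $\Xi_n$ in $\ell^1$). The corresponding tail of the empirical measure is then handled via reversibility of $\tilde X^n$, using the quenched Green-function bound $E^\omega_0[\xi(n,x)]\le n\mu_n(x)/\mu_n(0)$ and integrating against $\P$.

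\emph{Main obstacle.} The critical point is matching the two tails: upgrading the pointwise in-probability convergence of Step~2 to a genuine $\ell^1$ bound demands a uniform, quantitative control on the empirical occupation far from $b_n$, robust under the randomness of $\omega$. This is exactly where the Golosov/Sinaï localisation enters decisively, packaged through the $\ell^1$-tightness of $\Xi_n$ of Proposition~\ref{prop:tight}; confinement and the pointwise ergodic statement are then routine consequences of standard RWRE and i.i.d.\ arguments.
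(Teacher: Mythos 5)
Your plan is genuinely different from the paper's: you aim for total-variation convergence of $\xi(n,\cdot)/n$ to $\mu_n$, whereas the paper never proves TV convergence. Instead it estimates $|\S_n(F)-\Sigma_n(F)|$ directly, decomposing $\sum_k F(T_{X_k}\omega)$ over excursions from $b_n$, applying Kolmogorov's maximal inequality to the resulting centered i.i.d. excursion contributions, and bounding the excursion variance crudely by $\widetilde\Var^{\omega,n}_{b_n}(\xi_1)\le(c_n+1)\|F\|^2_\infty\sum_x\widetilde\Var^{\omega,n}_{b_n}(Y_x)$; the polylogarithmic growth of $c_n$ then makes $c_n^2 n^{-\delta}/n\to 0$ harmless. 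Your TV route, if it worked, would be strictly stronger, but as written it has two real problems.

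First, the Green-function bound for the tail is not usable in the form you give. The inequality $E^\omega_0[\xi(n,x)]\le n\mu_n(x)/\mu_n(0)$ is correct for the reversible chain $\tilde X^n$ started at $0$, but $\mu_n(0)=1/Z_n$, so $\mu_n(x)/\mu_n(0)=e^{-V(x)}+e^{-V(x-1)}$ and $\sum_{|x-b_n|>K}\bigl(e^{-V(x)}+e^{-V(x-1)}\bigr)$ is \emph{not} small: there are order $\log^2 n$ sites $x$ (near the origin, or wherever $V$ is close to $0$) contributing $O(1)$ each, and $K$ does not suppress them. You would need $\mu_n(b_n)$ in the denominator, which forces you to restart the Green-function argument from $T_{b_n}$ and control the pre-$T_{b_n}$ segment separately — at which point you are essentially performing the paper's renewal decomposition, not bypassing it.

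Second, you attribute the decisive quantitative input to Proposition~\ref{prop:tight}, the $\ell^1$-tightness of $\Xi_n$. That result controls the \emph{potential} $e^{-(V(b_n+\cdot)-V(b_n))}$, not the local-time fluctuations of the walk. What actually powers the pointwise (and, in the paper, the integral) estimate is the uniform variance bound $\widetilde\Var^{\omega,n}_{b_n}(Y_x)\le n^{1-\delta}$ on a high-probability environment event (Lemma~\ref{lem:variance}), which is proved via the KMT/Brownian coupling and a Donsker-scaling argument. Your ``second-moment estimate'' phrase gestures at this, but the needed uniformity over $x\in[0,c_n]$ and over $n$ is not supplied by Proposition~\ref{prop:tight}, and without Lemma~\ref{lem:variance} (or its content) neither your pointwise step nor the full $\ell^1$ claim closes.
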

\begin{proof}
Let $n\in\N^*$ and $\w\in\Omega$ be fixed. Denote $T^0=0.$ For $y\in\Z^+,$ denote 
$$T_{y}=T^1_{y}:=\inf\{t>0,\  X_t=y\}$$ and 
$$\forall k>1,\quad T^k_{y}:=\inf\{t>T^{k-1}_{y},\  X_t=y\}$$
the times of successive visits of  $y$ by the walk. Using the recurrence of $\X$, 
 $$\forall k\in\N^*,\ \forall y\in\Z^+,\quad  T^k_{y}<\infty \quad\PP-a.s.$$ 
Denote $k_n$ the number of visits of $b_n$ by the walk before the time $n:$
$$k_n=\sum_{t=0}^n\1_{\{b_n\}}(X_t)$$ and let $\mu_n$ be given by \eqref {eq:mu_n}. The  random walk with value in $\{0,\ldots, c_n\}$, reflected in $0$ and $c_n,$ admits  $\mu_n$ as an invariant measure, and $k_n/n$ can be compared with $\mu_n$.   
First of all we obtain a bound on the quenched  probability of the deviation of $k_n/n$ from $\mu_n$. 
\begin{align}\label{al:kndecomp}
&P^{\w}_0\left(\left|\frac {k_n}n-\mu_n(b_n)\right|>\eps\right)\leq\\ \nonumber
&P^{\w}_0\left(\left|\frac {k_n}n-\mu_n(b_n)\right|>\eps,\ T_{b_n}<n\eps/2,\ T_{c_n}>n\right)+P^{\w}_0\left(T_{b_n}\geq n\eps/2 \right)+P^{\w}_0\left(T_{c_n}\leq n\right).
\end{align}
For the first term of the inequality \eqref{al:kndecomp} we can write:
\begin{equation}\label{al:kndecomp1}
P^{\w}_0\left(\left|\frac {k_n}n-\mu_n(b_n)\right|>\eps,\ T_{b_n}<n\eps/2,\ T_{c_n}>n\right)\leq P^{\w}_0(B_1)+P^{\w}_0(B_2);
\end{equation}
Where we have denoted
\begin{align*}
&B_1:=\left\{ {k_n}\geq [n(\mu_n(b_n)+\eps)]+1,\ T_{b_n}<n\eps/2,\ T_{c_n}>n\right\};\\ \nonumber
&B_2:=\left\{ {k_n}\leq [n(\mu_n(b_n)-\eps)],\ T_{b_n}<n\eps/2,\ T_{c_n}>n\right\}.
\end{align*}
Both events $B_1$ and $B_2$ concern with the part of the trajectory $X_0,\ldots, X_n,$ where the value $c_n$ did not occurred.
Hence $P^{\w}_0(B_i)=\tilde P^{\w,n}_0(B_i),\ i=1,2.$ Then, using the definition of $(T^k_{b_n}),$ strong Markov property  and Markov inequality we can write for $n \geq 2/\eps$
\begin{multline}\label{al:B1}
P^{\w}_0(B_1)=\tilde P^{\w,n}_0(B_1)\leq \tilde P^{\w}_0\left ( T^{[n(\mu_n(b_n)+\eps)]+1}_{b_n} \leq n\right )\leq\\
\tilde P^{\w,n}_0\left ( \sum_{k=1}^{[n(\mu_n(b_n)+\eps)]}(T^{k+1}_{b_n}-T^{k}_{b_n})\leq n\right )\leq 
\tilde P^{\w,n}_{b_n}\left ( \sum_{k=1}^{[n(\mu_n(b_n)+\eps)]}\eta_k \leq -\frac{n\eps - 1}{\mu_n(b_n)}\right )\\ \leq\frac{n(\mu_n(b_n)+\eps)\mu_n^2(b_n)\widetilde \Var^{\w,n}_{b_n}\eta_1}{(n\eps - 1)^2}\leq 4\frac{(1+\eps)}{\eps^2}\frac{\widetilde\Var^{\w,n}_{b_n}\eta_1}n,
\end{multline}
where 
\[\eta_k=T_{b_n}^{k+1}-T_{b_n}^{k}-1/\mu_n(b_n)\]
are i.i.d. and centered under $\tilde P^{\w,n}_{b_n},$ since $\mu_n$ is the invariant probability for $\tilde p^{\w,n}$ and ${\tilde E^{\w,n}_{b_n}T_{b_n}^1=\frac 1{\mu_n(b_n)}.}$
Similar arguments give
\begin{multline}\label{al:B2}
P^{\w}_0(B_2)=\tilde P^{\w,n}_0(B_2)\leq \tilde P^{\w}_0\left ( T^{[n(\mu_n(b_n)-\eps)]}_{b_n}\geq n,\ T_{b_n}<n\eps/2\right )\leq\\
\tilde P^{\w,n}_0\left ( \sum_{k=1}^{[n(\mu_n(b_n)-\eps)]-1}(T^{k+1}_{b_n}-T^{k}_{b_n})\geq n(1-\eps/2)\right )\leq 
\tilde P^{\w,n}_{b_n}\left ( \sum_{k=1}^{[n(\mu_n(b_n)-\eps)]-1}\eta_k\geq \frac{n\eps}{2}\right )\\
\leq 4\frac{n(\mu_n(b_n)-\eps)\widetilde \Var^{\w,n}_{b_n}\eta_1}{n^2\eps^2}\leq 4\frac{(1-\eps)}{\eps^2}\frac{\widetilde\Var^{\w,n}_{b_n}\eta_1}n.
\end{multline}
Finally, putting together \eqref{al:kndecomp}, \eqref{al:kndecomp1}, \eqref{al:B1} and \eqref{al:B2} we get:
\begin{align}\label{al:knbound}
&P^{\w}_0\left(\left|\frac {k_n}n-\mu_n(b_n)\right|>\eps\right)\leq
&\frac{8}{\eps^2 n}{\widetilde\Var^{\w,n}_{b_n}T_{b_n}}+P^{\w}_0\left(T_{b_n}\geq n\eps/2\right)+P^{\w}_0\left(T_{c_n}\leq n\right),
\end{align}
Under the quenched law $\w$ is fixed. For fixed $\w\in\Omega$ and $x\in\Z_+$ denote 
\[F_{\w}(x):=F(T_{x}\w),\quad\overline F_{\w}(x):=F_{\w}(x)- \Sigma_n(F)\quad\mbox{and}\quad 
\tilde\eps=\eps/{3\|F\|_{\infty}}. 
\]
We will first obtain a non-asymptotic bound on the quenched probability of the deviation $\left|  \S_n(F) - \Sigma_n(F)\right|
=\left|\frac 1n\sum_{k=0}^n  F(T_{X_k}\w) - \Sigma_n(F)
\right|=\left|\frac 1n\sum_{k=0}^n  F_{\w}(X_k) - \Sigma_n(F)
\right|$:
\begin{align}\label{al:devdecomp1} \nonumber
&P^{\w}_0\left(\left|\frac 1n\sum_{k=0}^n  F_{\w}(X_k) - \Sigma_n(F)
\right|>\eps  \right)\leq P^{\w}_0\left(\left|\frac {k_n}n-\mu_n(b_n)\right|>\eps\right)+P^{\w}_0\left(T_{c_n}<n\right )+
\\ \nonumber
&P^{\w}_0\left(\sum_{k=0}^{T^1_{b_n}-1}| \overline F_{\w}(X_k)| >n\eps/3\ \right)+P^{\w}_0\left(\left|\sum_{k=T_{b_n}}^{T_{b_n}^{k_n}-1}
\overline F_{\w}(X_k)\right| >n\eps/3, \ \left|\frac {k_n}n-\mu_n(b_n)\right|\leq\eps,\ T_{c_n}>n\right )+\\ 
&P^{\w}_0\left ( \sum_{k=T_{b_n}^{k_n}}^{n}|\overline F_{\w}(X_k)|>n\eps/3,\ T_{c_n}>n\right)
\end{align}
Using the definition of $\tilde \eps$ we  see that 
\begin{equation}\label{eq:firsterm}
P^{\w}_0\left(\sum_{k=0}^{T^1_{b_n}-1}|\overline F_{\w}(X_k)| >n\eps/3\right)\leq P^{\w}_0\left(T_{b_n}>\tilde \eps n \right) .
\end{equation}
The law of ${T_{b_n}^{k_n+1}}-{T_{b_n}^{k_n}}$ conditionaly on $\FF_{T_{b_n}^{k_n}}$ is that of $T_{b_n}.$ Also, using the definition \eqref{eq:mu_n} we can see that $\mu_n(b_n)\geq 1/2c_n.$ Hence,
\begin{align}\label{al:lasterm}
\nonumber
&P^{\w}_0\left ( \sum_{k=T_{b_n}^{k_n}}^{n}|\overline F_{\w}(X_k)|>n\eps/3,\ T_{c_n}>n\right) =
\tilde P^{\w}_0\left ( \sum_{k=T_{b_n}^{k_n}}^{n}|\overline F_{\w}(X_k)|>n\eps/3,\ T_{c_n}>n\right)\leq
\\ 
&\tilde P^{\w}_0\left ( \sum_{k=T_{b_n}^{k_n}}^{T_{b_n}^{k_n+1}}|\overline F_{\w}(X_k)|>n\eps/3\right)\leq
\tilde P^{\w,n}_{b_n}\left(T_{b_n}>n\tilde \eps \right)\leq \frac{1}{n\mu_n(b_n)\tilde\eps}\leq
\frac{2 c_n}{n\tilde\eps}.
\end{align}
Now we obtain a bound for the main term of the decomposition \eqref{al:devdecomp1}. 
Denote for $k\in\N^*,$
$$\xi_k:=\sum_{l=T_{b_n}^k}^{T_{b_n}^{k+1}-1}\overline F_{\w}(X_l).$$
Under $\tilde P^{\w,n}_{0}$ the random variables $\ \xi_k, k\in\N^*$ are i.i.d. Their law is that of $\sum_{l=0}^{T_{b_n}-1}\overline F_{\w}(X_l)$ under $\tilde P^{\w,n}_{b_n}$ and they are centered, because 
$$E^{\w,n}_{b_n}\sum_{l=0}^{T_{b_n}-1}F_{\w}(X_l)=\mu_n(F_{\w}(\cdot))E^{\w,n}_{b_n}T_{b_n}=\Sigma_n(F) E^{\w,n}_{b_n}T_{b_n}.$$
Hence $M_m:=\sum_{k=1}^{m} \xi_k; \ m\in\N^*$ is a square-integrable martingale under $\tilde P^{\w,n}_{0}$. Using Kolmogorov inequality we get:

\begin{align}\label{al:main}
&P^{\w}_0\left(\left|\sum_{k=T_{b_n}}^{T_{b_n}^{k_n+1}-1}
\overline F_\w(X_k))\right| >n\eps/3, \ \left|\frac {k_n}n-\mu_n(b_n)\right|\leq\eps,\ T_{b_n}<\eps n,\ T_{c_n}>n\right )\leq\\ \nonumber
&\tilde P^{\w,n}_0\left(\left|\sum_{k=1}^{k_n}
\xi_k\right| >n\eps/3, \ \left|\frac {k_n}n-\mu_n(b_n)\right|\leq\eps\right )\leq \\ \nonumber
&\tilde P^{\w,n}_0\left(\sup_{m=1,\ldots [n(\mu_n(b_n)+\eps)]}\left|\sum_{k=1}^{m}
\xi_k\right| >n\eps/3\right )\leq \frac {9(\mu_n(b_n)+\eps)\widetilde \Var_{b_n}^{\w,n}(\xi_1)}{n\eps^2}
\end{align}
Pluging in \eqref{al:devdecomp1} the bounds \eqref{al:knbound}, \eqref{eq:firsterm}, \eqref {al:lasterm} and \eqref {al:main} we obtain:
\begin{multline}\label{al:devdecomp2}
P^{\w}_0\left(\left| \S_n(F)-\Sigma_n(F)
\right|>\eps  \right)\leq
\\
P^{\w}_0\left(T_{b_n}\geq n\eps/2\right)+P^{\w}_0\left(T_{b_n}\geq n\tilde \eps\right)+2P^{\w}_0\left(T_{c_n}\leq n\right)+
\\
\frac{8}{n\eps^2}{\widetilde\Var^{\w,n}_{b_n}T_{b_n}}+\frac {9(1+\eps)\widetilde \Var_{b_n}^{\w,n}(\xi_1)}{n\eps^2}+\frac{2 c_n}{n\tilde\eps}.
\end{multline}
To conclude the proof we need to estimate $\widetilde\Var^{\w,n}_{b_n}(\xi_1).$
For $x\in Z_+$ 
introduce 
\begin{equation} \nn
Y_{x}=\sum_{j=0}^{T_{b_n}}\1_{\{x\}}(X_j)
\end{equation}
the local time in $x$ during $1$-th excursion from $b_n$ to $b_n.$ 
Note that under $\tilde P^{n}_{b_n},$
\begin{align}\label{al:xibound}
\xi_1 = \sum_{x=0,\ldots, c_n}  \overline F_\w(x) Y_x  \end{align}
and
$$ \widetilde\Var^{\w,n}_{b_n}(\xi_1)\leq (c_n+1)\|\overline F_\omega\|_\infty^2\sum_{x=0,\ldots, c_n}\widetilde\Var^{\w,n}_{b_n}(Y_x).$$
Taking  $\overline F_\omega=1$  in \eqref{al:xibound} we get:
\[\widetilde\Var^{\w,n}_{b_n}(T_{b_n}) \leq (c_n+1) \sum_{x=0,\ldots, c_n}\widetilde\Var^{\w,n}_{b_n}(Y_x).\]
Using Lemma \eqref{lem:variance}, which is given in Appendix,
for all $\eta >0,$ there exists $\delta >0$ and an event $\Omega_{\eta,\delta}\subset \Omega$ with
$\P(\Omega_{\eta,\delta})>1-\eta$ such that : $\forall \w\in \Omega_{\eta,\delta}, \forall x\in [0,c_n],$
$$\qquad \widetilde\Var^{\w,n}_{b_n}(Y_x)\leq n^{1-\delta}\;.$$
The proof of Lemma \ref{lem:variance} is given in appendix.
 As a consequence, for all $\eta >0,$ there exists $1>\delta >0,$ and a set $\Omega_{\eta,\delta}$ with $\P(\Omega_{\eta,\delta})>1-\eta,$ such that for all $\w\in \Omega_{\eta,\delta}$ it holds 
\begin{align}\label{al:devdecomp4}
P^{\w}_0\left(\left| \S_n(F)-\Sigma_n(F)
\right|>\eps  \right)\leq
C\frac{c_n^2n^{1-\delta}}{n}+2P^{\w}_0\left(T_{b_n}\geq (\eps +\tilde \eps)n\right)+2P^{\w}_0\left(T_{c_n}\leq n\right).
\end{align}
The last bound tends to zero. Indeed,
from \cite{Golosov}, Lemma 1, ${P^{\w}_{0}\left({T_{b_n}} >n\eps'  \right)\to 0}$ for all $\w\in\Omega$ and $\eps'>0.$
And from \cite {Golosov}, Lemma 7, for all $\w\in\Omega,$ ${P^{\w}_0\left(T(c_n)\leq n\right)\to 0}.$
\end{proof}

\subsection{Convergence in distribution  of  $ \Xi_n$}\label{sec:tight} 
Recall that $\Xi_n$ is a random element with values in $\ell^1$ given by
  \[
 \Xi_n:=\left \{\exp[-(V(b_n+x)-V(b_n))]\1_{\{-b_n,\ldots, c_n-b_n-1\}}(x)\ ;\ x\in\Z \right. \}
 \]
Denote by $P_{\Xi_n}$ the law of $ \Xi_n$
 on  $(\ell^1,     \BB(\ell^1)).$ 
The following proposition is the key technical result of the paper.
\begin{prop}\label{prop:main}
Suppose that Assumptions \eqref{as:rec} and\eqref{as:var} are satisfied. Then the following holds:
\begin{description}

\item {i)}
For all $\eta\in]0,1/2[$ and $\delta>0$,
\begin{equation*}
\lim_{K\to +\infty}\liminf_{n\to\infty}\P\left ( V(b_n+x)-V(b_n) \geq \delta x^{\eta},\ \forall x \in \llbracket K, c_n-b_n\rrbracket \  \ \right )=1.
\end{equation*}
\item {ii)} Suppose that in addition that  Assumptions \eqref{as:aritm} is satisfied.
Then for all $\eta\in]0; 1/3[$,
\begin{equation*}
\lim_{K\to +\infty}\liminf_{n\to\infty}\P\left ( V(b_n-x)-V(b_n) \geq  x^{\eta},\ \forall x \in  \llbracket K, b_n\rrbracket\ \ \right )=1.
\end{equation*}
\end{description}
\end{prop}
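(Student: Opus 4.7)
Both statements concern the shape of the potential $V$ near its minimum $b_n$ on $[0,c_n]$, and the natural approach is to reduce them to quantitative growth estimates for a centred random walk conditioned to stay non-negative, to which the techniques of \cite{riter} apply. Throughout, $V$ is a centred random walk with step $\xi=\log\rho_0$ of finite positive variance $\sigma^2$ under Assumption \ref{as:var}.

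\textit{Right-hand side (part (i)).} By the strong Markov property of $V$ at $b_n$, conditional on $(b_n, V(b_n))$ the excursion
\[
W^{(n)}(x) := V(b_n + x) - V(b_n), \qquad 0 \leq x \leq c_n - b_n,
\]
is a centred random walk starting at $0$, staying non-negative, and first crossing level $\log n + \sqrt{\log n}$ at time $c_n - b_n$. A Wiener--Hopf / renewal-theoretic decomposition (exploiting that $b_n$ is the weak descending ladder index of $V$ restricted to $[0,c_n]$) compares, on the event $\{c_n - b_n > N\}$ and uniformly in $n$, the law of $(W^{(n)}(x))_{0\leq x\leq N}$ with that of the random walk conditioned to stay non-negative in Doob's sense (with $h$ the descending ladder renewal function). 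The growth estimates of \cite{riter} then yield, for every $\eta<1/2$ and $\delta>0$ small enough,
\[
\lim_{K\to\infty} \inf_{N\geq K}\P^{\uparrow}\!\bigl( S_x \geq \delta\, x^{\eta}\;\text{for all } K \leq x \leq N \bigr) = 1.
\]
Combined with the classical estimates $c_n - b_n \to \infty$ in probability and $c_n - b_n = O_{\P}(\log^2 n)$ (see \cite{Golosov}, Lemma 7), this gives part (i).

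\textit{Left-hand side (part (ii)).} Setting $\hat W^{(n)}(x) := V(b_n - x) - V(b_n)$ for $0 \leq x \leq b_n$ and reversing the i.i.d.\ increments, $\hat W^{(n)}$ is, conditionally on $b_n$, a centred random walk starting at $0$, staying strictly positive on $(0, b_n]$, and ending at the \emph{prescribed} value $-V(b_n)\approx \log n$. This extra fixed-endpoint constraint is the source of two difficulties: it demands a local limit estimate of the form $\P(V(b_n)=-\ell h)\asymp b_n^{-1/2}$, which is exactly what Assumption \ref{as:aritm} provides and which allows us to bound the density of $\hat W^{(n)}$ with respect to the free conditioned walk by a factor polynomial in $\log n$; and it forces the exponent to drop from $\eta<1/2$ to $\eta<1/3$, the loss reflecting the price paid in both the endpoint-density estimate and the polynomial-in-$\log n$ control on $b_n$. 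Once these factors are absorbed, the same Ritter-type growth bound as in part (i) closes the argument.

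\textit{Main obstacle.} The real content of the proposition lies in making both comparisons quantitative and uniform over the entire range $x\in[K,c_n-b_n]$ or $x\in[K,b_n]$. Two scales must be bridged simultaneously: a small-$x$ regime dominated by the boundary effect of the Doob transform (where the typical height is smaller than $\sqrt{x}$), and a large-$x$ diffusive regime (where $W^{(n)}(x)$ is of order $\sqrt{x}$). The left-hand case is strictly harder than the right-hand one because of the fixed-endpoint conditioning, which is exactly where the arithmetic Assumption \ref{as:aritm} becomes indispensable and which explains the weaker exponent $\eta<1/3$ in part (ii).
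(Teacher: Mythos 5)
Your proposal for part~(i) is in the same spirit as the paper's: reduce to a random walk conditioned on $\{\tau_{\log n+\sqrt{\log n}}<\tau_{0-}\}$ and import a Ritter-type growth bound. The paper, however, does not pass through a Wiener--Hopf comparison of densities with the Doob $h$-transformed walk. Instead it works directly with the conditional measure: it partitions $[K,c_n-b_n]$ into dyadic blocks $[c^{r-1},c^r)$, introduces the disjoint first-failure events ${\cal A}_x$, applies optional stopping to estimate $\P_y(\tau_{L_n}<\tau_{0-})\le (y+c_0)/L_n$ at the failure height $y<\delta x^\eta$, and then uses Spitzer's tail bound $\P(\tau_{0-}>a)\lesssim a^{-1/2}$ to control $\P(\tau_{0-}>c^{r-1})$, obtaining $\sum_r\P({\cal C}_r)\lesssim (L_n)^{-1}\sum_r c^{r(\eta-1/2)}$. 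This is both more elementary and self-contained than what you sketch, and it produces the constraint $\eta<1/2$ transparently; your ``compare with $\P^\uparrow$'' step would require a uniform-in-$n$ density comparison that you do not establish.

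For part~(ii) there is a genuine gap. You attribute the need for Assumption~\ref{as:aritm} to a local limit estimate of the form $\P(V(m)=-\ell h)\asymp m^{-1/2}$, used to control a fixed-endpoint conditioning via a Radon--Nikodym comparison with a ``free'' conditioned walk. That is not what arithmeticity buys in the actual argument, and it is not clear your route closes. The paper instead decomposes the left side of the valley along the \emph{strict descending ladder epochs} $T_\ell$ of $V$: since $b_n=T_{N(n)}$ with $N(n)$ geometric of parameter $p_n=\P(\tau_{L_n}<\tau_{0-})$, the question reduces to controlling the ladder excursion lengths $\sigma_\ell=T_\ell-T_{\ell-1}$. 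Arithmeticity enters solely through the elementary lower bound $V(T_\ell)-V(T_{\ell+1})\ge h$ on each ladder jump, giving $V(T_{N-m})-V(T_N)\ge hm$, which on the failure event forces $\sigma_{N-m}\gtrsim m^{1/\eta-1}$. Spitzer's estimate $\P(\tau_{0-}>a)\lesssim a^{-1/2}$ then yields a contribution $\sum_{m>M} m^{-(1-\eta)/(2\eta)}$, whose convergence (and smallness of the tail, $M^{3/2-1/(2\eta)}\to0$) forces $\eta<1/3$. Your stated explanation of the $1/3$ threshold --- ``endpoint-density estimate and polynomial-in-$\log n$ control on $b_n$'' --- is therefore incorrect; the loss comes from trading ladder \emph{heights} (linear in $m$ by arithmeticity) for ladder excursion \emph{lengths} via Spitzer. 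Without the ladder decomposition, the geometric law of $N(n)$, and the jump lower bound $h$, your sketch has no concrete path to the exponent $1/3$ nor to the uniformity in $n$.
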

The proposition \eqref{prop:main} is proven in Section \eqref{sec:propmain}.
\begin{prop}\label{prop:tight}
Suppose that Assumptions \eqref{as:rec},\eqref{as:var} and \eqref{as:aritm} are satisfied.
Then the sequence $P_{\Xi_n}$ is relatively compact
 in $(\ell^1, \|\cdot\|_1)$.

\begin{proof}
Recall that $(\ell^1,\|.\|_1)$ is a complete separable metric space and hence , using Prohorov's theorem (\cite {Bil}),
 the sequence of distributions $P_{\Xi_n}$ is relatively compact if and only if it is tight.
 Recall also the characterization of the compacts in $(\ell^1,\|.\|_1):$
  $$\KK\subset \ell^1\quad\mbox{is compact}\quad\Longleftrightarrow\quad\sup_{l\in\KK}\|l\|_1<\infty\quad\mbox {and}\quad \lim_{N\to\infty}\sup_{l\in\KK} \sum_{|x|\geq N}|l_x|=0.$$
%
 Let 
 $\eta \in (0,1/3),$ $K>0,$ 
 and denote
  \[ \KK(\eta, K):=\{l\in\ell^1;\;   |l_x|\leq 1\quad  \mbox{and}\quad  \forall x\in \Z  \quad   |l_x|\leq e^{- |x|^{\eta}}\quad \mbox{if}\quad  |x|\geq K\}.\]
 Since $\sum_{x\in\Z} e^{- |x|^{\eta}}<\infty,\ $  $\KK(\eta, K)\ $ is a compact in $\ell^1.$
As a consequence of Proposition \ref{prop:main}, for any fixed $\eta\in (0,1/3),$  forall $\eps >0$,  there exists 
$K>0,$   such that 
\[\liminf_{n}\P\left ( e^{-(V(b_n+x)-V(b_n))}<e^{- |x|^{\eta}},\ \forall  x\in\llbracket -b_n, c_n-b_n\rrbracket; \quad |x-b_n|>K \ \right )\geq 1-\eps.\]
Hence, for $n$ large enough
$ 
 \P(\Xi_n\in\KK({\eta, K)})\geq 1-\eps$
 and  the sequence $P_{\Xi_n}$ is tight.
 \end{proof}
 \begin{theo}\label{th: weakconv}
 Suppose that Assumptions \eqref{as:rec},\eqref{as:var} and \eqref{as:aritm} are satisfied.
 Then the sequence $P_{\Xi_n}$ 
converges weakly to the law of  
$\ \{\exp[-\widetilde V(x)]\ ;\ x\in\Z\}$ 
 on $(\ell^1, \BB(\ell^1))$.
\end{theo} 
 \begin{proof}
 From \cite{Golosov} the following convergence of finite dimensional distributions  (fidi) holds: 
$$\big\{\exp[-(V(b_n+x)-V(b_n))]\ ;\ x\in\Z \big \}\stackrel{\rm fidi}{\longrightarrow}
 \big\{\exp[-\widetilde V(x)]\ ;\ x\in\Z\big\}.$$
Using $b_n\to\infty$ a.s. and $c_n-b_n\to\infty$ a.s.,
the finite dimensional distributions of  $\Xi_n$ converge weakly to those of $\ \{\exp[-\widetilde V(x)]\ ;\ x\in\Z\}$ . 
 Denote
 by $\M$ the class of continuous, bounded, finite-dimensional functions:
  $$\M:=\bigcup_{k\in\N^*}
  \big\{ f\in\C_b(\ell^1):  f(l_i)=f(l_i') \; \forall i\in \llbracket-k,k \rrbracket \implies f(l)=f(l') \big\}.$$
 It is clear that $\M$ separates points: if $l\in\ell^1,$ $l'\in\ell^1,$ $l\neq l',$ that there exists $f\in\M,$ such that $f(l)\neq f(l').$ 
 Since $(\ell^1,d)$ is separable and complete, and $\M$ separates points, using Theorem 4.5 from \cite{EK} $\M$ is separating.
 Now the claim follows directly from  proposition \eqref{prop:tight} and lemma 4.3 of \cite{EK}.
 
 \end{proof}
 
\section{Proof of Proposition \eqref{prop:main}}\label{sec:propmain}
\begin{proof}
We start by  proving $ii).$
Let $T_0:=0$ and for all $\ell\in\N^*,$ put
$$\ T_{\ell+1}:=\inf\{y>T_{\ell},\ V(y)<V(T_{\ell})\}.$$
The sequence $(T_{\ell})_{\ell\in\N}$ is the sequence of the strict descending ladder epochs of $V.$
Let $$e_{\ell}=((V(z)-V(T_{\ell-1})),\; T_{\ell-1}\leq z<{T_{\ell}}),\quad \ell\in\N^*.$$
Using the strong Markov property of $V,$ the sequence $(e_{\ell});\; \ell\in\N^*$  is an i.i.d. sequence.   Let $N(n)$ be a random time, such that $b_n=T_{N(n)}.$ Namely, setting as previously $L_n:=\ln n+\sqrt {\ln n},$ we have following \cite{Golosov} p.492, 
$$N(n):=\inf\{\ell\in\N^*;\; \max\{V(z)-V(T_{\ell-1});\; T_{\ell-1}\leq z<T_{\ell}\}\geq L_n\}.$$
 Due to the independence and the equidistribution of the excursions $(e_{\ell}); \; \ell\in\N^*,$ the random variable  $N(n)$ is geometrically distributed with the parameter  
 \begin{equation}\label{param}
 p_n:=\P(\tau_{L_n}<\tau_{0-}),\;\;  \mbox{where}
 \end{equation}
 $$\tau_{L_n}:=\inf\{z>0,\; V(z)\geq L_n\}\quad\mbox{and}\quad\tau_{0-}:=\inf\{z>0;\; V(z)<0\}.$$ 
 Let $K\in\N^*$ and denote 
\begin{align*}
C(\eta,K,n):&=\{\forall x=K,\ldots, b_n,\; \ V(b_n-x)-V(b_n)\geq x^{\eta}\}\\
&=\{\forall y=0,\ldots, b_n-K,\;  V(y)-V(b_n)\geq |y-b_n|^{\eta}\}.
\end{align*}
Keeping in mind the relation $b_n=T_{N(n)},$ we can observe that
\begin{equation}\label{eq:Ceta}
C(\eta, K,n)=
\{\forall \ell =0,\ldots N(n)-1;\  |T_{\ell}-T_{N(n)}|\geq K;\ V(T_{\ell})-V(T_{N(n)})\geq|T_{\ell}-T_{N(n)}|^{\eta}\}
\end{equation}
Indeed, let $\ell(y)\in \N$ be the number of ladder excursion containing $y$, i.e.
$T_{\ell(y)}\leq y<T_{\ell(y)+1}$, then using the fact that
the function $x\to x^{\eta}$ is increasing on $\R_+,$ $V(y)\geq V(T_{\ell(y)})$ and $V(T_{\ell})-V(T_{N(n)})\geq|T_{\ell}-T_{N(n)}|^{\eta} $  for all $\ell\in\N,$ we have
%
 $$V(y)-V(T_{N(n)})=V(y)-V(T_{\ell(y)})+V(T_{\ell(y)})-V(T_{N(n)})\geq |T_{\ell(y)}-T_{N(n)}|^{\eta}\geq |y-T_{N(n)}|^{\eta}.$$
%
  Which prove \eqref{eq:Ceta}.
  
   Due to the arithmeticity of the law of $\log\rho_0,$ for all $\ell\in\N,$  $V(T_{\ell})-V(T_{\ell+1})\geq h.$ 
Therefore we can write:
\begin{align}\label{al:deco1}
&\P(C^c(\eta,K,n))=\\ \nonumber
&\sum_{N=1}^{\infty}\P(\exists \ell=0,\ldots, N-1,\ T_N-T_{\ell}\geq K,\; V(T_{\ell})-V(T_{N})<|T_{\ell}-T_{N}|^{\eta};\  N(n)=N)\leq\\\nonumber
&\sum_{N=1}^{M}\P(N(n)=N)+\sum_{N=M+1}^{\infty}\P(\exists m=1,\ldots, M,\ T_N-T_{N-m}\geq K ;\  N(n)=N)+\\ \nonumber
&\sum_{N=M+1}^{\infty}\P(\exists m=M+1,\ldots, N,\ T_N-T_{N-m}\geq  (hm)^{1/\eta};\  N(n)=N):=\\ \nonumber
&S_1 ( n,M)+S_2(K,n,M)+S_3(\eta, n,M).\nonumber
\end{align}
Here in the third line we denoted
$m=N-\ell$  the number of "ladder" between $T_N$ and $T_{\ell}$ and the auxiliary $M\in\N^*$ will be choose later.
We obviously have
\begin{equation}\label{eq:sn1}
S_1 (n, M)=\sum_{N=1}^M(1-p_n)^{N-1}p_n=1-(1-p_n)^M\sim M p_n\to 0\quad\mbox {if}\quad n\to\infty.
\end{equation}
For the second sum we can write, denoting $\sigma_{\ell}:=T_{\ell}-T_{\ell-1}$ the length of the $\ell$-th ladder 
\begin{align}\label{al:s2}
&S_2(K,n,M):= 
\sum_{N=M+1}^{\infty}\P(\exists m=1,\ldots, M,\ T_N-T_{N-m}\geq K ;\  N(n)=N)\leq\nonumber\\
&\sum_{N=M+1}^{\infty}\P(T_N-T_{N-M}\geq K;\; N(n)=N)=
\sum_{N=M+1}^{\infty}\P(\sigma_{N-M+1}+\ldots +\sigma_N\geq K;\; N(n)=N)\leq\nonumber\\
&\sum_{N=M+1}^{\infty}\sum_{\ell=N-M+1}^N\P(\sigma_{\ell}\geq  K/ M;\; N(n)=N).\nonumber
\end{align}
 The event $\{N(n)=N\}$ can be written as
$$\{N(n)=N\}=\{\tau^1_{0-}<\tau^1_{L_n};\ldots; \tau^{N-1}_{0-}<\tau^{N-1}_{L_n}; \tau^{N}_{0-}>\tau^{N}_{L_n}\},$$
where we denoted 
 $\tau^{\l}_{0-}:=\tau_{0-}\circ\theta_{ T_{\l}}$ and $\tau^{\l}_{L_n}:=\tau_{L_n}\circ\theta_{ T_{\l}}.$ Let $c_{Sp}>0$ be such that $\forall a>c_{Sp},$ $\P(\tau_{0-}>a)\leq {C}{ a^{-1/2}},$ where $C$ is a positive constant.  Following \cite{Spitzer} we can choose such a constant $c_{Sp}$.
Let $M$ in \eqref{al:deco1} be fixed in a such a way that $K/M>c_{Sp}$. 
Then, using the independence of the ladder excursions, together with the definition \ref {param}, we can write (note that $\sigma_1=\tau^1=\tau_{0-}$),

\begin{equation*}
\P(\sigma_{\ell}\geq \frac KM;\; N(n)=N) \leq \P(\sigma_{\ell}\geq \frac K{M}\cap \tau^{\ell}_{0-}<\tau^{\ell}_{L_n})p_n(1-p_n)^{N-2}\leq
C\sqrt{\frac MK} p_n(1-p_n)^{N-2}. \end{equation*}

Using this bound we see that 
\begin{equation}
\label{S2bound}
S_2(K, n,M)\leq\sum_{N=M+1}^{\infty}\frac { C(M_1)^{3/2}p_n(1-p_n)^{N-2}}{\sqrt K}= \frac { C(M_1)^{3/2}(1-p_n)^{M-1}}{\sqrt K}\leq \frac { C(M_1)^{3/2}}{\sqrt K} .
\end{equation}
To find a bound for $S_3(\eta,n,M)$ we introduce

$$B_m:=\{ T_N-T_{N-m}\geq  (hm)^{1/\eta},\;\;  T_N-T_{N-(m-1)}<  (h(m-1))^{1/\eta}  \}.$$
Then
\begin{align}\label{al:S3}
&S_3(\eta,n,M)=\\ \nonumber 
&\sum_{N=M+1}^{\infty}\P(\exists m=M+1,\ldots, N,\ T_N-T_{N-m}\geq  (V(T_{N-m})-V(T_n))^{1/\eta};\  N(n)=N)\leq \\ \nonumber 
&\sum_{N=M+1}^{\infty}\P(\exists m=M+1,\ldots, N,\ T_N-T_{N-m}\geq  (hm)^{1/\eta};\  N(n)=N) \leq \\ \nonumber 
&\sum_{N=M+1}^{\infty}\sum_{m=M+1}^N\P(B_m\cap N(n)=N)
\end{align}
Remark that $T_N-T_{N-m}=T_N-T_{N-m+1}+\sigma_{N-m}$ and hence  $B_m\subset \{\sigma_{N-m}\geq c'' m^{\frac 1{\eta}-1}\}$ for $c=\eta^{-1}\sup_{[1;2]}x^{1/\eta-1}\leq \eta^{-1}2^{1/\eta-1}.$ Chose $M$ such that $c'' M^{\frac 1{\eta}-1}>c_{Sp}$ where $c_{Sp}$ is again a constant of Spitzer. Then we can write

\begin{multline}
\sum_{m=M+1}^N\P(B_m\cap  N(n)=N )\leq \sum_{m=M+1}^N\P(\sigma_{N-m}\geq c'' m^{\frac 1{\eta}-1}; \tau^m_{0-}<\tau^m_{L_n})p_n(1-p_n)^{N-2}\\
\leq p_n(1-p_n)^{N-2}\sum_{m=M+1}^N\P(\sigma_{N-m}\geq c'' m^{\frac 1{\eta}-1})=
p_n(1-p_n)^{N-2}\sum_{m=M+1}^N\P(\tau_{0-}\geq c'' m^{\frac 1{\eta}-1})\\
\leq p_n(1-p_n)^{N-2}\sum_{m=M_1}^N\frac {C}{
m^{\frac {(1-\eta)}{2\eta} }       }\leq Cp_n(1-p_n)^{N-2}M^{3/2-\frac 1{2\eta}   }
\end{multline}
Finally  we get
\begin{equation} \label{SS3}
S_3(n)\leq \sum_{N=M}^{\infty}p_n(1-p_n)^{N-2} M^{3/2-\frac 1{2\eta}   }\leq M^{3/2-\frac 1{2\eta}}.
\end{equation}

And finally  putting together \eqref {eq:sn1}, \eqref {S2bound} and \eqref{SS3} we obtain from \eqref{al:deco1}:
\begin{align}\label{al:deco11}
&\P(C^c(\eta,K,n))=\\ \nonumber
&S_1 ( n,M)+S_2(K,n,M)+S_3(\eta, n,M)\leq 
 Mp_n+\frac {M^{3/2}}{\sqrt K} + M^{3/2-\frac 1{2\eta}}\nonumber.
\end{align}
If  $\eta <1/3,$ then $3/2-\frac {1}{2\eta}<0.$ Remember that the bound on $S_2$ is valuable if $K/M>c_{Sp}$ and that on $S_3$, if $c"M^{\frac {1-\eta}{\eta}}>c_{Sp}.$ Hence we first choose $M$ large enough, such that 
simultaneously $c"M^{\frac {1-\eta}{\eta}}>c_{Sp}$ and $M^{3/2-\frac 1{2\eta}}\leq \eps.$ Then we  get
$$\lim_{K\to\infty}\limsup_{n\to\infty}\P(C^c(\eta, K, n))\leq \eps$$
which, sins $\eps>0$ is arbitrary, concludes the proof.

\ec
Now we prove  $i)$.\\
Recall $V(x)= \sum_{y=1}^x \log \rho_y$  if $x>0$, $(\log \rho_y)_{y\in\Z} $ i.i.d. centered.  Denote 
\begin{eqnarray}\nn
\tau_{\ln}=\inf \big\{x\in\Z_+,\ V(x)\geq\log n+\sqrt{\log n} \big\}\;,\quad \tau_{0-}= \inf \big\{x\in\Z_+,\ V(x)<0 \big\}\;.
\end{eqnarray}
Using strong Markov property,
$$\cL(\{V(b_n+x)-V(b_n),\ x=K-b_n,\ldots, c_n-b_n\})=\cL(\{V(x), \ x=K,\ldots, \tau_{\ln} | \tau_{\ln}<\tau_{0-}\})\;.$$
Hence we have to prove
\begin{equation}\label{eq:CN'}
\lim_{K\to +\infty}\inf_{n}\P\left ( V(x)\geq\delta x^{\eta},\ \forall x =K,\ldots,  \tau_{\ln}\wedge \tau_{0-} \ | \tau_{\ln}<\tau_{0-} \right )=1.
\end{equation}

or equivalently
\begin{equation}\label{eq:amontrer}
\lim_{K\to +\infty}\sup_{n}\frac {\P\left ( \exists x \geq K,\  x< \tau_{\ln}\wedge \tau_{0-},\    V(x)<\delta x^{\eta},\  \tau_{\ln}<\tau_{0-}\right )}{ \P \left (\tau_{\ln}<\tau_{0-}\right )} =0.
\end{equation}

%
%
The following proof is inspired by \cite{riter}.
Let $c>1$ be an integer such that $\forall a>c,$ $\P(\tau_{0-}>a)\leq {C}{ a^{-1/2}},$ where $C$ is a positive constant.  Following Spitzer \cite{Spitzer} we can choose such a constant $c$.
 For $r\in \N^*,$ denote ${\cal C}_r$ the following event
\begin{equation*}
{\cal C}_r:=\{\exists x\in[c^{r-1},c^{r}[; \;\;  x<\tau_{\ln}\wedge \tau_{0-};\;\;    V(x)<\delta x^{\eta};\;\;  \tau_{\ln}<\tau_{0-}\}
\end{equation*}
and denote for $x\in [c^{r-1},c^{r}[$
\begin{equation*}
{\cal A}_x:=\{ \forall z\in[c^{r-1}, x[; \;\; V_z\geq \delta z^{\eta};\;\;  V(x)<\delta x^{\eta};\;\;   x<\tau_n\wedge \tau_{0-}\}
\end{equation*}
Note that ${\cal A}_x$ are disjoint for $x\in [c^{r-1},c^{r}[$  and that

$${\cal C}_r:= \bigcup_{x=c^{r-1}}^{c^{r}-1}\left \{ {\cal A}_x \cap \{ \tau_{\ln}<\tau_{0-} \} \right\}.$$

For all $x\in\N,$ denote $\P:=\P_0,$  $\FF_x=\sigma\{V_0,\ldots, V_x\}$. Denote $L_n=\log n+\sqrt{\log n}.$ 
It is easy to see that
$$\forall 0\leq y<L_n,\quad \P_y (\tau_{\ln}<\tau_{0-})\leq \frac{y+c_0}{L_n}$$
for some positive constant $c_0=\sup_{n,y\geq 0} \E_y(-V(\tau_{0-})\mid\tau_{0-}< \tau_{\ln})$. 
Indeed, using Doob stopping theorem, and the fact that $V_{\tau_L}>L,$
$$ \P_y (\tau_{\ln}<\tau_{0-})=\frac {y-\E_y [V_{\tau_{0-}}\mid\tau_{0-}< \tau_{\ln}]}{\E_y[ V_{\tau_L}\mid\tau_{L}< \tau_{0-}]-\E_y [V_{\tau_{0-}}\mid\tau_{0-}< \tau_{\ln}]}.$$

Since the event $\{\tau_{\ln} < \tau_{0-}\}$ is invariant under shift, using Markov property we can write:
\begin{multline*}
\P_0\left ({\cal A}_x\cap \{ \tau_{\ln}<\tau_{0-} \}\right )= \E_0 \left [\P_0\left ({\cal A}_x\cap \{ \tau_{\ln}<\tau_{0-}\}|\FF_x \right )\right ]=\\
\E_0\left ( \forall z\in[c^{r-1}, x[; \;\; V_z\geq \delta z^{\eta};\;\;  V(x)<\delta x^{\eta};\;\;   x<\tau_{\ln}\wedge \tau_{0-};\;\; \P_{V_x}(\tau_{\ln}<\tau_{0-})\right )\leq\\
\P_0\left ( \forall z\in[c^{r-1}, x[; \;\; V_z\geq\delta z^{\eta};\;\;  V(x)<\delta x^{\eta};\;\;   x<\tau_{\ln}\wedge \tau_{0-}\right ){(\delta x^{\eta}+c_0)}(L_n)^{-1}=\\
\P({\cal A}_x){(\delta x^{\eta}+c_0)}(L_n)^{-1}.
\end{multline*}
Hence, using the fact that ${\cal A}_x$ are disjoint and that   
$$\forall x=c^{r-1},\ldots, c^{r}-1;\quad {\cal A}_x\subset \{\tau_{0-}>x\}\subset \{\tau_{0-}>c^{r-1}\},$$
with our choice of $c$, we have
\begin{multline*}
\P({\cal C}_r)= 
\sum_{x=c^{r-1}}^{c^{r}-1} \P\left  ({\cal A}_x \cap \{ \tau_{\ln}<\tau_{0-} \} \right )
\leq \frac{\delta c^{r\eta}+c_0}{L_n} \sum_{x=c^{r-1}}^{c^{r}-1} \P \left ( {\cal A}_x \right )=\\
\frac{\delta c^{r\eta}+c_0}{L_n}\P \left ( \bigcup_{x=c^{r-1}}^{c^{r}-1}{\cal A}_x \right )\leq
\frac{\delta c^{r\eta}+c_0}{L_n}\P \left ( \tau_{0-}>c^{r-1}\right )\leq\\
C\sqrt c(\delta c^{r(\eta-1/2)}+ c_0 c^{-r/2})(L_n)^{-1}.
\end{multline*}
Finally, for any $n$ and $R\geq 2$,
\begin{multline*}
\P\left ( \exists x >c^{R-1};\;\;   x< \tau_{\ln}\wedge \tau_{0-};\;\;    V(x)<\delta x^{\eta};\; \;  \tau_{\ln}<\tau_{0-}\right )\leq\\ 
\sum_{r=R}^{\infty}\P({\cal C}_r)\leq C_1(\delta + c_0 c^{-R\eta}) c^{R(\eta-1/2)} (L_n)^{-1},
\end{multline*}
and \eqref{eq:amontrer} follows from $\P_0 ( \tau_{\ln}<\tau_{0-})\sim(L_n)^{-1}$.

\end{proof}

\end{prop}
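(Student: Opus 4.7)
The plan is to handle the two sides of $b_n$ separately, because they admit fundamentally different descriptions. By the strong Markov property at $b_n$, the law of $\{V(b_n+x) - V(b_n),\; 0 \leq x \leq c_n-b_n\}$ coincides with that of $\{V(x),\; 0 \leq x \leq \tau_{L_n}\}$ under $\P(\,\cdot \mid \tau_{L_n} < \tau_{0-})$, where $L_n = \log n + \sqrt{\log n}$, $\tau_{L_n} = \inf\{x>0 : V(x) \geq L_n\}$ and $\tau_{0-} = \inf\{x>0 : V(x) < 0\}$. For the left side, if $0 = T_0 < T_1 < \cdots$ denotes the strict descending ladder epochs of $V$, then $b_n = T_{N(n)}$ where $N(n) = \inf\{\ell : \max_{T_{\ell-1} \leq z < T_\ell}[V(z) - V(T_{\ell-1})] \geq L_n\}$ is geometric with parameter $p_n \sim 1/L_n$. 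I would prove part (i) via the first description and part (ii) via the ladder one.

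For part (i) I would reduce to the equivalent statement
$$\lim_{K\to\infty}\sup_n \frac{\P\bigl(\exists\, x\in \llbracket K, \tau_{L_n}\wedge\tau_{0-}\rrbracket:\; V(x) < \delta x^\eta,\; \tau_{L_n}<\tau_{0-}\bigr)}{\P(\tau_{L_n}<\tau_{0-})} = 0,$$
then cover the range geometrically by $[c^{r-1}, c^r)$ for a constant $c>1$ past which Spitzer's tail $\P(\tau_{0-}>a) \leq C/\sqrt a$ is effective. For each $x$ in a block, let $\mathcal{A}_x = \{V(z) \geq \delta z^\eta$ for $c^{r-1}\leq z < x,\; V(x) < \delta x^\eta,\; x < \tau_{L_n}\wedge\tau_{0-}\}$; these events are disjoint and $\bigcup_x \mathcal{A}_x \subset \{\tau_{0-} > c^{r-1}\}$. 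Optional stopping on the martingale $V$ yields $\P_y(\tau_{L_n}<\tau_{0-}) \leq (y+c_0)/L_n$, so the Markov property at $x$ gives $\P(\mathcal{A}_x \cap \{\tau_{L_n}<\tau_{0-}\}) \leq \P(\mathcal{A}_x)(\delta x^\eta + c_0)/L_n$. Summing inside each block and then over $r \geq R$ produces a bound of order $c^{R(\eta - 1/2)}/L_n$; dividing by $p_n \sim 1/L_n$ and letting $K = c^{R-1}\to\infty$ concludes, with $\eta < 1/2$ being exactly what guarantees summability.

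For part (ii) I would argue at the ladder scale. On the bad event $\{V(T_\ell) - V(T_{N(n)}) < |T_\ell - T_{N(n)}|^\eta\}$, the arithmeticity of $\log \rho_0$ forces $V(T_\ell) - V(T_{N(n)}) \geq (N(n)-\ell)h$, so with $m := N(n) - \ell$ one gets $T_{N(n)} - T_{N(n)-m} > (hm)^{1/\eta}$. Split over $m$ into $m \leq M$ and $m > M$ for a parameter $M$ to be chosen large. The finite-$m$ part is controlled by $\P(\sigma_\ell > K/M) \leq C\sqrt{M/K}$ applied to each of the $M$ last ladder lengths, which is summable after multiplying by the geometric weight $p_n(1-p_n)^{N-2}$. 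For the tail part, reaching $(hm)^{1/\eta}$ in only $m$ steps forces at least one single excursion $\sigma$ of length $\geq c'' m^{1/\eta - 1}$, whose probability by Spitzer is at most $C m^{-(1-\eta)/(2\eta)}$; summability in $m$ requires $(1-\eta)/(2\eta) > 1$, i.e.\ $\eta < 1/3$. Using independence of the ladder excursions and the geometric prefactor $p_n$, the bound becomes uniform in $n$, and choosing first $M$ large and then $K$ large yields the limit.

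The main difficulty lies in part (ii): two different random normalizations — the ladder count $m$ and the spatial distance $|T_\ell - T_{N(n)}|$ — must be reconciled, and the arithmeticity of $\log \rho_0$ is what converts a bound on potential differences into a bound on ladder counts, allowing the geometric distribution of $N(n)$ to be exploited. The threshold $\eta < 1/3$ arises precisely from the interplay of Spitzer's $1/\sqrt a$ tail with the power $m^{1/\eta - 1}$ forced on the worst excursion; without arithmeticity, individual ladder drops could be arbitrarily small and the combinatorial reduction would collapse entirely.
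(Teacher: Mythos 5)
Your proposal follows the paper's own proof of Proposition~\ref{prop:main} almost verbatim in structure: the right side of $b_n$ is reduced via the strong Markov property to a path conditioned on $\{\tau_{L_n}<\tau_{0-}\}$, the range is covered by geometric blocks $[c^{r-1},c^r)$ with disjoint events $\mathcal A_x$ controlled through Doob stopping and Spitzer's tail bound (giving $\eta<1/2$); the left side goes through descending ladder epochs, the geometric law of $N(n)$, and arithmeticity (giving $\eta<1/3$). The decomposition in $m\leq M$ versus $m>M$ and the final ordering (choose $M$ large, then $K$ large) also match the paper.

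One point in part~(ii) is under-justified as written. You say that $T_{N}-T_{N-m}\geq(hm)^{1/\eta}$ ``forces at least one single excursion $\sigma$ of length $\geq c''m^{1/\eta-1}$'' and then apply Spitzer to bound its probability by $Cm^{-(1-\eta)/(2\eta)}$. If this is a pigeonhole over the $m$ ladder lengths, the union bound introduces an extra factor of $m$, and $\sum_m m\cdot m^{-(1-\eta)/(2\eta)}$ converges only when $\eta<1/5$, not $\eta<1/3$. The paper avoids this factor by partitioning according to the \emph{smallest} $m$ for which the sum first crosses the threshold (the events $B_m$), which forces a \emph{specific} triggering excursion $\sigma_{N-m+1}$ to be large; the $B_m$ are then disjoint, so no factor of $m$ appears, and one genuinely obtains $\eta<1/3$. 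Since your claimed threshold agrees with the paper's, you probably intend this disjoint decomposition, but as stated the sketch would deliver only the weaker exponent; this is the one place where the reduction needs to be made explicit rather than gestured at.
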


\section{Proof of Proposition \ref{thm:inv}}\label{sec:inv}


(i) By definition of $\S_n$, we have for $F \in {\mathcal C}( \Omega)$, 
\begin{equation}\nn
\int_\Omega RF d \S_n = \frac 1n \sum_{k=1}^n RF (\overline {\omega}_k ) \;,
\end{equation}
 and by definition of $R$, 
 $$RF (\overline {\omega}_k ) = E_0^\omega [F(\overline {\omega}_{k+1} ) | X_k]\;.$$
 $$\EE[RF(\bar\w_k)]=\EE[F(\bar\w_{k+1})].$$
Thus,
$$
{\mathbf E} \int_\Omega RF d \S_n  =
{\mathbf E} \int_\Omega F d \S_n  + {\mathcal O}\big(\frac{\|F\|_\8}{n}\big) \,.
$$
 Note that from the definition of the topology  of weak convergence on ${\mathcal P}(\Omega),$ for all continuous bounded $H:\Omega\rightarrow\R,$ the function
 ${\mathcal H}: {\mathcal P}(\Omega)\rightarrow \R$ given for all $\mu\in {\mathcal P}(\Omega)$by
 ${\mathcal H}(\mu):=\int_{\Omega}Hd\mu$ is continuous.

Hence, as $n \to \8$, we obtain from the
 convergence in distribution given by the Theorem \ref {thm:main},

$${\mathcal E} \int_\Omega RF d \S_\8  =
{\mathcal E} \int_\Omega F d \S_\8   \,,
$$
that is, $\int_\Omega RF d \IS = \int_\Omega F d \IS$. Hence $\IS$ is invariant.  

(ii) To show reversibility, we need to prove that for the chain starting from $\overline{\omega}_0 \sim \IS$, 
\begin{equation}\label{eq:9/11/1}
\EE F( \overline{\omega}_0) G( \overline{\omega}_1)  = \EE F( \overline{\omega}_1) G( \overline{\omega}_0) 
\end{equation}
for $F, G$ continuous and bounded on $\Omega$. Let $\widetilde \P$ denote the law of $ \widetilde \omega$. 
By definition of the transition $R$, the LHS in \eqref{eq:9/11/1} is equal to
\begin{eqnarray} 
\begin{split}
\nn
&\int_\Omega d \widetilde \P(\widetilde \omega)  \sum_{x \in \Z} \widetilde \nu (x) F(T_x \widetilde\omega) 
\big[ \widetilde \omega(x) G(T_{x+1} \widetilde \omega) + (1-\widetilde \omega(x)) G(T_{x-1} \widetilde \omega)  \big]\\ \nn
&= 
\int_\Omega d \widetilde \P(\widetilde \omega) 
  \big[ \sum_{x \in \Z} \widetilde \nu (x) F(T_x \widetilde \omega) \widetilde \omega(x) G(T_{x+1} \widetilde \omega) 
+
 \sum_{x \in \Z} \widetilde \nu (x) F(T_x  \widetilde  \omega)  (1-\widetilde \omega(x)) G(T_{x-1} \widetilde \omega)  \big]\\ \nn
 & = 
 \int_\Omega d \widetilde \P(\widetilde \omega) 
  \big[ \sum_{x \in \Z} \widetilde \nu (x+1) F(T_x  \widetilde  \omega)(1- \widetilde \omega({x+1})) G(T_{x+1} \widetilde \omega) 
  +
  \sum_{x \in \Z} \widetilde \nu (x-1) F(T_x  \widetilde \omega)  \widetilde \omega({x-1}) G(T_{x-1} \widetilde \omega)  \big] \\\nn
 & = 
 \int_\Omega d \widetilde \P(\widetilde \omega) 
  \big[ \sum_{y \in \Z} \widetilde \nu (y) F(T_{y-1} \widetilde  \omega)(1- \widetilde \omega({y})) G(T_{y} \widetilde \omega)  
  +
   \sum_{z \in \Z} \widetilde \nu (z) F(T_{z+1}  \widetilde  \omega)  \widetilde \omega({z}) G(T_{z} \widetilde \omega)  \big]
 \end{split}
\end{eqnarray}
using
the relation
\begin{equation}\label{eq:rel}
\tilde \nu(x-1)\tilde \w(x-1)=\tilde\nu(x)(1-\tilde\w(x))=\exp(-\tilde V(x))/2\sum_{y\in\Z}\exp(-\tilde V(y))
\end{equation}

 in the third line and change of variables $y=x+1, z=x-1$ in the last one. The last expression being the RHS of  \eqref{eq:9/11/1},
we obtain reversibility of $\IS$, which implies invariance by taking $g=1$.

(iii) Consider the event 
$$
\Omega_+= \left\{ \omega \in \Omega : \sum_{y=1}^x \log \frac{1-\omega(y)}{\omega(y)} \geq 0\quad \forall x \geq 1 \right\}\;.
$$
By construction of $\widetilde V$ and since $V$ is a mean-zero random walk under $\P$, we have
$$
\IS ( \Omega_+) = 1\,, \quad  \P ( \Omega_+)=0\;,
$$
so the two measures are mutually singular.
%
\qed 
\begin{rema}
From the proof (ii) of reversibility we see that there exist many invariant measures by the transition $R$. Indeed, the proof works thanks to the relation \eqref{eq:rel}, (which means the reversibility of the measure $\tilde \nu$ with respect to  the walk in the environment $\tilde\w$)  and thanks to the fact that $\sum_{x \in \Z} \exp[-\tilde V(x)]$ finite a.s. . Since for any $\widehat V$ (in the place  of $\tilde V$), $\hat\w $ and $\hat \nu$ defined with the help of
$\widehat V$ using \eqref{equa:tildeOmega},\eqref{equa:Nu} the measure on $\Omega$ given by  $E \sum_{x \in \Z} \widehat \nu (x) \delta_{T_x \widehat{\w}},$ where the expectation is taken w.r.t. to the law of $\widehat V$,
is reversible for $R.$
\end{rema}

\section{Appendix}
%
Recall \begin{equation} \nn
Y_{x}=\sum_{j=0}^{T_{b_n}}\1_{\{x\}}(X_j)
\end{equation}

\begin{lemm}\label{lem:variance}
For all $\eta >0$ there exists $\delta >0$ and an event $\Omega_{\eta,\delta}\subset \Omega$ with
$$\P(\Omega_{\eta,\delta})>1-\eta$$ such that for all $\w\in \Omega_{\eta,\delta},$  for all $x\in [0,c_n],$
$$\qquad \widetilde\Var^{\w,n}_{b_n}(Y_x)\leq n^{1-\delta}\;.$$
\end{lemm}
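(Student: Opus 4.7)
The plan is to reduce the quenched variance of $Y_x$ to a quantity depending only on the environment, via a Markov-chain analysis, and then to exploit Proposition~\ref{prop:main} together with random-walk fluctuation estimates to control this quantity uniformly in $x$.

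First I would use the regenerative structure of the excursion from $b_n$. For $x \neq b_n$, decomposing at the first visit to $x$ and using the strong Markov property shows that $Y_x$ under $\tilde P^{\w,n}_{b_n}$ vanishes with probability $1-q_x$, where $q_x := \tilde P^{\w,n}_{b_n}(T_x < T_{b_n}^+)$, and otherwise equals $1+G$ with $G$ geometric of parameter $p_x := \tilde P^{\w,n}_x(T_{b_n} < T_x^+)$. This yields $\widetilde{\Var}^{\w,n}_{b_n}(Y_x) \leq 2 q_x/p_x^2$. Reversibility of $\mu_n$ (i.e.\ $\mu_n(b_n)q_x = \mu_n(x)p_x$), together with $\mu_n(x)/\mu_n(b_n) \leq 2$ (which follows from $V(b_n) = \min_{[0,c_n]}V$), reduces this to $\widetilde{\Var}^{\w,n}_{b_n}(Y_x) \leq 4/p_x$. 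Gambler's ruin applied to the birth-death chain $\tilde\X^n$ then gives, for $b_n < x \leq c_n$,
\[
p_x^{-1} \leq \delta_0^{-1} D_x, \qquad D_x := \sum_{z=b_n}^{x-1} e^{V(z) - V(x-1)},
\]
with the symmetric formula for $x < b_n$. It therefore suffices to exhibit an event $\Omega_{\eta,\delta}$ of $\P$-probability $\geq 1-\eta$ on which $D_x \leq C n^{1-\delta}$ uniformly in $x \in [0,c_n]$.

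Next I would split by the height $U(x-1) := V(x-1) - V(b_n) \geq 0$, fixing some $\delta_1 \in (\delta,1)$. When $U(x-1) \geq \delta_1 \log n$, the definition of $c_n$ forces each term in $D_x$ to be at most $\exp(\log n + \sqrt{\log n} - \delta_1 \log n)$, so summing the $\leq c_n$ terms gives $D_x \leq n^{1-\delta}$ on the high-probability event $\{c_n \leq (\log n)^3\}$. When $U(x-1) < \delta_1 \log n$, Proposition~\ref{prop:main}(i), applied with some $\eta_1 \in (0,1/2)$, confines $x-1-b_n$ to an interval of length $K_n = O((\log n)^{1/\eta_1})$ on an event of $\P$-probability $\geq 1-\eta/4$, so that $D_x$ has at most $K_n$ terms. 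On this short window I would use a Gaussian-type tail estimate for the centered random walk $V$ to control $\max_{z \in [b_n,\,b_n+K_n]}(V(z)-V(b_n))$ on a further event of large $\P$-probability, making each term of $D_x$ subpolynomial in $n$ and hence $D_x \leq n^{1-\delta}$ in this regime as well. The case $x<b_n$ is treated symmetrically using Proposition~\ref{prop:main}(ii).

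The hard part will be the shallow regime $U(x-1) < \delta_1 \log n$: the naive bound $D_x \leq K_n \exp(\log n + \sqrt{\log n})$ yields only $n^{1+o(1)}$, which is too weak. Extracting the strict exponent $1-\delta$ requires a careful interplay between the polylogarithmic scale $K_n$ (dictated by Proposition~\ref{prop:main}(i), and close to $(\log n)^2$ when $\eta_1$ is taken close to $1/2$) and the Gaussian-type fluctuations of the centered random walk $V$ on that window; the companion analysis for $x < b_n$ is where the arithmeticity Assumption~\ref{as:aritm}, which underlies Proposition~\ref{prop:main}(ii), enters the argument. Intersecting all of the high-probability events produced along the way defines the event $\Omega_{\eta,\delta}$.
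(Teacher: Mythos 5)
The first half of your reduction — birth–death excursion analysis, reversibility, gambler's ruin — arrives at essentially the same quantity the paper calls $A_n^\w(x)$ (your $D_x$), so that step is sound and parallels the paper's use of estimates (2.8), (2.10), (2.11) from \cite{GPS}. The genuine gap is in your treatment of the shallow regime $U(x-1) < \delta_1 \log n$, which you yourself flag as "the hard part." The plan you sketch there cannot work: Proposition~\ref{prop:main}(i) confines $x-1-b_n$ to a window of length $K_n \approx (\delta_1\log n/\delta)^{1/\eta_1}$, which is at least $(\log n)^{2+o(1)}$, and the typical oscillation of a centered random walk over a window of that length is of order $\sqrt{K_n}\gtrsim \log n$. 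Hence $\max_{z\in[b_n,b_n+K_n]}(V(z)-V(b_n))$ is $\Theta(\log n)$, not $o(\log n)$, and the individual terms $e^{V(z)-V(x-1)}$ are of order $n^{\Theta(1)}$ — not subpolynomial as you claim. No Gaussian tail estimate on that fixed polylogarithmic window will close this; conditioning $V$ to stay nonnegative near $b_n$ only makes the excursion larger, not smaller.

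What the paper does instead — and what is essential here — is to bypass the regime split entirely. After the same reduction to $A_n^\w$, it invokes the KMT strong approximation to couple $V$ with a Brownian motion up to $O(\ln c_n)$ error, rescales by $\sigma^2\ln^2 n$, and shows by Donsker's theorem that the maximal drawdown
\[
\Delta_n=\max\bigl\{W^{(n)}(u)-W^{(n)}(v):\ \bar b_n\le u\le v\le\bar c_n\bigr\}
\]
(and its left-half analogue $\Delta'_n$) converges in law to a Brownian functional $\Delta$ supported in $[0,1)$, so that for every $\eta$ there is $\delta$ with $\liminf_n\P(\Delta_n\vee\Delta'_n<1-\delta)>1-\eta$. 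This uniform $(1-\delta)\log n$ bound on the oscillation is exactly what converts the trivial bound $A_n^\w(x)\le c_n\,e^{\sigma^2\ln n\cdot\Delta_n+O(\ln c_n)}$ into $n^{1-\delta}$. The strict inequality $\Delta<1$ comes from the geometry of the limiting valley (any $\log n$-sized drawdown would violate the minimality of $b_n$ or the definition of $c_n$), a global structural fact about the valley profile that a local window estimate cannot see. You would need to import this Donsker-type argument (or an equivalent global control of the valley profile) to finish; as written, the shallow-regime step fails and the lemma is not proved.
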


\begin{proof} 
Note that $Y_{b_n}=1$.
Using $(2.10)$ of \cite{GPS}, for $x\neq b_n$ 
we have
$\widetilde\Var^{\w}(Y_{x})\leq \frac 4{\beta(x)},$
where
$$\beta(x)=(1-\w_{x}) \tilde P^{\w,n}_{x-1}\left (T(b_n)<T(x)\right ), \quad x=b_n+1,\ldots c_n;$$
$$\beta(x)=\w_{x} \tilde P^{\w,n}_{x+1}\left (T(b_n)<T(x)\right ), \quad x=0,\ldots, b_n-1.$$

 Then using the hypoellipticity, $(2.8)$ and   $(2.11)$ of \cite{GPS} we can find a constant $C_0$ such that for $y=1,\ldots c_n-b_n,$
\begin{equation}
\label{eq:variance}
\widetilde\Var^{\w}(Y_{b_n+y})\leq C_0 \sum_{j=b_n}^{b_n+y-1}e^{V(j)-V(b_n+y-1)},
\end{equation}
and  for $y=-b_n,\ldots, -1,$
\begin{equation}
\label{eq:variance-}
\widetilde\Var^{\w}(Y_{b_n+y})\leq  C_0 \sum_{j=b_n+y}^{b_n-1}e^{V(j)-V(b_n+y)}.
\end{equation}

However the exponential is missing in the bound $(2.13)$ of \cite{GPS}, hence, to complete the proof  we have to  handle the term
\begin{equation}
A_n^{\w}(x):=
\left\lbrace
\begin{array}{ccc}
\sum_{j=b_n}^{b_n+x-1}e^{V(j)-V(b_n+x-1)} & \mbox{si} & x=1,\ldots c_n-b_n,\\
\sum_{j=b_n+y}^{b_n-1}e^{V(j)-V(b_n+y)} & \mbox{si} & x=-b_n,\ldots, -1
\end{array}\right.
\end{equation}
more carefully. Using Komlos-Mayor-Tusnady theorem we can construct a probability space $(\Omega,{\cal A},P)$ on which are defined the environment $\w$ and a  Brownian motion $W$ s.t. a.s.
\[\sup_{0\leq s\leq t}|V(s)-\sigma W_s|\leq C\ln t,\]
where $\sigma^2=\E(\log\rho)^2$ and $V(s)$, $s\geq 0,$ is defined equal to $V(j)$ on $[j,j+1[$, $j\in \Z_+.$
Let $t=c_n,$ $W^{(n)}(s)=\frac 1{\sigma\ln n}W(s\sigma^2\ln^2n),$ $\bar b_n=\frac{b_n}{\sigma^2\ln^2n},$ $\bar c_n=\frac{c_n}{\sigma^2\ln^2n}$ and $\ln_2:=\ln\ln.$ Then we can write, with $z=\sigma^2\ln^2n\times u,$
\begin{align}
&A_n^{\w}(x)=\int_{b_n}^{b_n+x-1}\exp\{\sigma|W(z)-W(b_n+x-1)|+O(\ln c_n)\}dz=\\ \nonumber
&=\sigma^2\ln^2n\int_{\bar b_n}^{\bar b_n+\frac{x-1}{\sigma^2\ln^2n}}e^{\{\sigma^2\ln n[W^{(n)}(u)-W^{(n)}(\bar b_n+\frac{x-1}{\sigma^2\ln^2n})]\}}du\times e^{(O(\ln c_n))}.
\end{align}
Denote 
\[\Delta_n:=\max\{W^{(n)}(u)-W^{(n)}(v);\ \bar b_n\leq u\leq v\leq \bar c_n \}\]
and
\[\Delta'_n:=\max\{W^{(n)}(u)-W^{(n)}(v);\ 0\leq v\leq u\leq \bar b_n \}\]

It follows that
\[\max\{A_n^{\w}(x),\ x=0,\ldots, c_n-b_n\}\leq c_n\exp\{\sigma^2\ln n\times\Delta_n+O(\ln c_n)\}.\]
Similarly, for $x\in [-b_n,\ldots, 0],$

\[\max\{A_n^{\w}(x),\ x=-b_n,\ldots, 0\}\leq c_n\exp\{\sigma^2\ln n\times\Delta'_n+O(\ln c_n)\}.\]
And hence, 
\begin{equation}\label{eq:estim}
\max \{\frac{\ln A_n^{\w}(x)}{\sigma^2\ln n};\ x\in[-b_n;c_n-b_n]\}\leq \Delta_n\vee\Delta'_n+O(\frac{\ln c_n}{\ln n}).
\end{equation}
By Donsker theorem, $(\bar b_n,\bar c_n,W^{(n)})$ converges in distribution to $(\bar b,\bar c, \bar W)$, where $\bar W$ is a brownian motion, $$\bar c=\inf \{s\geq 0,\ \bar W(s)-\min_{0\leq t\leq s}\bar W(t)\geq 1\}$$ and $$\bar b:=\inf\{u\geq 0, \ \bar W(u)=\min_{0\leq t\leq \bar c}\bar W(t)\}.$$
Therefore, $(\Delta_n,\Delta'_n)$ converges in distribution to  $(\Delta,\Delta'),$ with
\[\Delta:=\max\{\bar W(u)-\bar W(v);\ \bar b\leq u\leq v\leq \bar c\}\]
and
\[\Delta':=\max\{\bar W(u)-\bar W(v);\ 0\leq v\leq u\leq \bar b \}\]
Both $\Delta<1$ and $\Delta'< 1$ a.s., so
$\Delta\vee\Delta'< 1$ a.s. 
Using the monotonicity  and still Donsker theorem it follows that $\forall \eta >0,\ \exists \delta>0,$ such that 
\begin{equation}\label{eq:estdelta}
\liminf_{n\to\infty}\P(\Delta_n\vee\Delta_n^{'}<1-\delta)>1-\eta
\end{equation}
It follows from \eqref{eq:estdelta} and \eqref{eq:estim} that $\forall \eta>0,\exists \delta >0,$ s.t.
\begin{equation}\label{eq:final}
\liminf_{n\to\infty}\P(\max_{[-b_n,c_n-b_n]} A_n^{\w}(x)\leq n^{1-\delta})\geq 1-\eta
\end{equation}
Denote $\Omega_{\eta,\delta}:=\{\w\in\Omega;\ \max_{[-b_n,c_n-b_n]} A_n^{\w}(x)\leq n^{1-\delta}\}$ such that 
$\P(\Omega_{\eta,\delta})>1-\eta.$ 
Suppose that $\w\in \Omega_{\eta,\delta}$. Then for all $x\in [-b_n,c_n-b_n],$ $\widetilde\Var_{\w} Y_n(x)\leq n^{1-\delta}$.
\end{proof}

\bibliography{Dev_31_08_22}

\begin{thebibliography}{17}
\expandafter\ifx\csname natexlab\endcsname\relax\def\natexlab#1{#1}\fi
\expandafter\ifx\csname url\endcsname\relax
  \def\url#1{\texttt{#1}}\fi
\expandafter\ifx\csname urlprefix\endcsname\relax\def\urlprefix{URL }\fi

\bibitem[{Bertoin(1993)}]{Bertoin}
Bertoin, J., 1993. Splitting at the infimum and excursions in half-lines for
  random walks and l\'evy processes. Stochastic Process.Appl. 47, 17--35.

\bibitem[{Billingsley(1999)}]{Bil}
Billingsley, P., 1999. Convergence of Probability Measures, 2nd Edition. Wiley
  Series in probability and statistics.

\bibitem[{Doyle and Snell(1984)}]{DoSn}
Doyle, P.~G., Snell, J.~L., 1984. Random walks and electric networks. Vol.~22
  of Carus Mathematical Monographs. Mathematical Association of America,
  Washington, DC.

\bibitem[{Gantert et~al.(2010)Gantert, Peres, and Shi}]{GPS}
Gantert, N., Peres, Y., Shi, Z., 2010. The infinite valley for a recurrent
  random walk in random environment. Ann. Inst. Henri Poincar\'e Probab. Stat.
  46~(2), 525--536.

\bibitem[{Golosov(1984)}]{Golosov}
Golosov, A.~O., 1984. Localization of random walks in one-dimensional random
  environments. Comm. Math. Phys. 92~(4), 491--506.

\bibitem[{Hall and Heyde(1980)}]{Hall_Heyde}
Hall, P., Heyde, C.~C., 1980. Martingale limit theory and its application.
  Academic Press Inc. [Harcourt Brace Jovanovich Publishers], New York,
  probability and Mathematical Statistics.

\bibitem[{Kozlov and Molchanov(1984)}]{KozMol}
Kozlov, S., Molchanov, S.~., 1984. On conditions for applicability of the
  central limit theorem to random walk on the lattice. Sov. Math. Doklady.
  30~(30), 410--413.

\bibitem[{Levin et~al.(2009)Levin, Peres, and Wilmer}]{LePeWi}
Levin, D.~A., Peres, Y., Wilmer, E.~L., 2009. Markov chains and mixing times.
  American Mathematical Society, Providence, RI, with a chapter by James G.
  Propp and David B. Wilson.

\bibitem[{L.V.Bogachev(2006)}]{Bog}
L.V.Bogachev, 2006. Random Walks in Random Environments. Vol.~4 of Encyclopedia
  of Mathematical Physics. elsevier.

\bibitem[{Molchanov(1994)}]{Molchanov-StFlour}
Molchanov, S., 1994. Lectures on random media. Lectures on probability theory
  (Saint-Flour, 1992),. Vol. Lecture Notes in Math., 1581, Springer, Berlin.
  Springer, Berlin.

\bibitem[{Ritter(1981)}]{riter}
Ritter, G.~A., 1981. Growth of random walks conditioned to stay positive. The
  Annals of Probability 9~(4), 699--704.

\bibitem[{Sina{\u\i}(1982)}]{sinai}
Sina{\u\i}, Y.~G., 1982. The limit behavior of a one-dimensional random walk in
  a random environment. Teor. Veroyatnost. i Primenen. 27~(2), 247--258.

\bibitem[{Solomon(1975)}]{Sol}
Solomon, F., 1975. Random walks in a random environment. Ann. Probability 3,
  1--31.

\bibitem[{Spitzer(1960)}]{Spitzer}
Spitzer, F., 1960. A tauberian theorem and its probability interpretation.
  Trans. Am. Math. Soc.

\bibitem[{Stewart N.~Ethier(2005)}]{EK}
Stewart N.~Ethier, T. G.~K., 2005. Markov {Processes}. {Characterization} and
  {Convergence}. Wiley-Interscience.

\bibitem[{Sznitman(2002)}]{Sznitman-10lectures}
Sznitman, A.-S., 2002. Ten lectures on random media. No.~32 in DMV Seminar.
  Birkh\"{a}user Verlag, Basel.

\bibitem[{Zeitouni(2004)}]{ZeitouniSF}
Zeitouni, O., 2004. Random walks in random environment. In: Lectures on
  probability theory and statistics. Vol. 1837 of Lecture Notes in Math.
  Springer, Berlin, pp. 189--312.

\end{thebibliography}
\bibliographystyle{chicago}
\end{document}